\newcommand{\SR}{\mathcal{S}(\mathbb{R})}
\newcommand{\I}{\mathcal{I}}
\newcommand{\F}{\mathcal{F}}
\newcommand{\R}{\mathbb{R}}
\newcommand{\n}{\vert \vert}
\newtheorem{theoreme}{Theorem}
\newtheorem{lemme}{Lemma}
\newtheorem{definition}{Definition}
\newtheorem{proposition}{Proposition}
\newtheorem{remark}{Remark}
\title{Global-in-time existence of perturbations around travelling-waves}
\author{ Afaf Bouharguane \footnote{Institut de Math\'ematiques et Mod\'elisation de Montpellier, UMR 5149 CNRS, Universit\'e Montpellier 2, Place Eug\`ene Bataillon, CC 051
34095 Montpellier, France. Email:\, {\sffamily bouharg@math.univ-montp2.fr}
} 
}
\date{\today}
\begin{document}
\maketitle

\begin{abstract}
We investigate a fractional diffusion/anti-diffusion equation proposed by Andrew C. Fowler to describe the dynamics of sand dunes sheared by a fluid flow. In this paper, we prove the  global-in-time well-posedness in the neighbourhood of travelling-waves solutions of the Fowler equation. \\

\end{abstract}

\noindent \textbf{Keywords:} nonlinear and nonlocal conservation law, fractional anti-diffusive operator, Duhamel formulation, travelling-wave, global-in-time existence. \\

\noindent \textbf{Mathematics Subject Classification:} 35L65, 45K05, 35G25, 35C07.

\section{Introduction}

The study of mechanisms that allow the formation of structures such as sand dunes and ripples at the bottom of a fluid flow plays a crucial role in the understanding of coastal dynamics. The modeling of these phenomena is particularly complex since we must not only solve the Navier-Stokes or Saint-Venant equations 
with equation for sediment transport, but also take into account the evolution of the bottom. Instead of solving the whole system fluid flow, free surface and free bottom, nonlocal models of fluid flow interacting with the bottom were introduced in \cite{fowler1,clagree}. Among these models, we are interested in the following nonlocal conservation law \cite{fowler1,fowler2}:
\begin{equation}
\begin{cases}
\partial_t u(t,x) + \partial_x\left(\frac{u^2}{2}\right) (t,x) + \I [u(t, \cdot)] (x)
- \partial_{xx}^2 u(t,x) = 0 & t \in (0,T), x \in \mathbb{R}, \\
u(0,x)=u_0(x) & x \in \mathbb{R},
\end{cases}
\label{fowlereqn2}
\end{equation}
where $T$ is any given positive time, $u=u(t,x)$ represents the dune height (see Fig. \ref{dune}) and $\I$
is a nonlocal operator defined as follows: for any Schwartz
function $\varphi \in \SR$ and any $x \in \mathbb{R}$,
\begin{equation}
\I [\varphi] (x) := \int_{0}^{+\infty} |\xi|^{-\frac{1}{3}}
\varphi''(x-\xi) d\xi . \label{nonlocalterm}
\end{equation}

\begin{figure}[h!]
	\centering
	\includegraphics[scale=0.5]{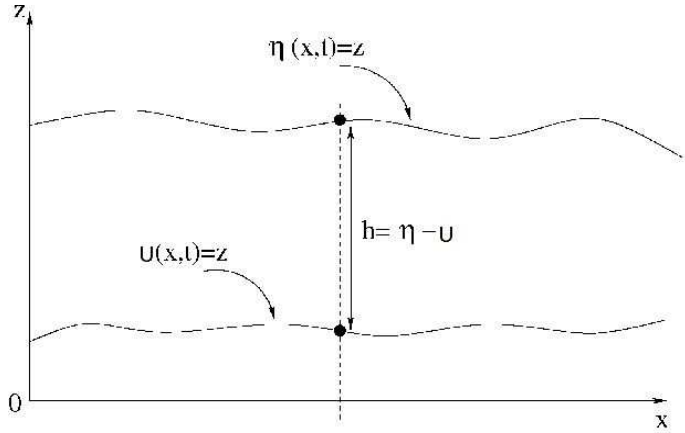} 
	\caption{Domain considered for the Fowler model: $h$ is the depth water, $\eta$ the free surface and $u$ the seabottom. }
	\label{dune}	
\end{figure}

Equation \eqref{fowlereqn2} is valid for a river flow over an erodible bottom $u(t,x)$ with slow variation and describes both accretion and erosion phenomena \cite{alibaud}. See \cite{alibaud,azaf} for numerical results on this equation.  \\
The nonlocal term $\I$ can be seen as a fractional power of order $2/3$ of the Laplacian with the bad sign. Indeed, it has been proved \cite{alibaud} 
\begin{equation} 
 \F \left( \I [\varphi]-\varphi''\right) (\xi) = \psi_{\mathcal{I}}(\xi)  \F
\varphi (\xi)
\label{fourier2}
\end{equation}
where 
\begin{equation}
\psi_{\mathcal{I}}(\xi)=4 \pi^2 \xi^2-a_{\I}
|\xi|^{\frac{4}{3}} + i \: b_{\I} \xi |\xi|^{\frac{1}{3}},
\label{psiexpression2}
\end{equation}
 with $a_\I, b_\I$ positive constants,
 $\F$ denotes the Fourier transform defined in \eqref{TF2} 
 and $\Gamma$ denotes the Euler function. One simple way to establish this fact is the derivation of a new formula for the operator $\I$, see Proposition \ref{intfor2}. 

\begin{remark} 
For causal functions (i.e. $\varphi(x) = 0 $ for $x<0$), this operator is 
 up to a multiplicative constant, the Riemann-Liouville fractional derivative operator which is defined as follows \cite{pod}
 \begin{equation}
\frac{1}{\Gamma(2/3)} \int_0^{+\infty} \frac{\varphi^{''}(x-\xi)}{|\xi|^{1/3}} d\xi= \frac{d^{-2/3}}{d x^{-2/3}} \varphi'' (x) = 
\frac{d^{4/3}}{d x^{4/3}} \varphi (x).
\end{equation}
\label{liouville0}
\end{remark} 

Therefore, the Fowler model has two antagonistic terms: a usual diffusion and a nonlocal fractional anti-diffusive term of lower order. This remarkable feature enabled to apply this model for signal processing. Indeed, the diffusion is used to reduce the noise whereas the nonlocal anti-diffusion  is used to enhance the contrast \cite{signal}. \\

Recently, some results regarding this equation have been obtained, namely, existence of travelling-waves $u_{\phi}(t,x) = \phi(x - ct)$ where $\phi \in C^1_{b}(\R)$ and $c \in \R$ represents wave velocity, the global well-posedness for $L^2$-initial data, the failure of the maximum principle and the local-in-time well-posedness in a subspace of $C^1_{b}$ \cite{alibaud, alvarez}.
 Notice that the travelling-waves are not necessarily of solitary type (see \cite{alvarez}) and therefore may not belong to $L^2(\R)$, the space where a global well-posedness result is available. In \cite{alvarez}, the authors prove local well-posedness in a subspace of  $C^1_{b}(\R)$ but fail to obtain global existence. \\
An interesting topic is to know if the shape of this travelling-wave is maintained when it is perturbed. This raises the question of the stability of travelling-waves. But before interesting ourselves to this problem, we have to show first
the global existence of perturbations around these travelling-waves. Hence in this paper,    
 we prove the global well-posedness in an $L^2$-neighbourhood of a regular travelling-wave, namely $u = u_{\phi} +v$. To prove this result, 
we consider the following Cauchy problem:
\begin{equation}
\begin{cases}
\partial_t v(t,x) + \partial_x(\frac{v^2}{2}+ u_{\phi} v)(t,x) + \I[v(t, \cdot)](x) -
\partial^2_{xx} v(t,x) = 0 & t \in (0,T), x \in \R, \\
v(0,x)=v_{0}(x) & x \in \R,
\end{cases}
\label{fowlermodif2}
\end{equation}
where $v_0 \in L^{2}(\R)$ is an initial perturbation and $T$ is any given positive time. \\

To prove the existence and uniqueness results, we begin by introducing the notion of \emph{mild} solution (see Definition \ref{mild2}) based on Duhamel's formula \eqref{duhamel2}, in which the kernel $K$ of $\I-\partial_{xx}^2$ appears. The use of this formula allows to prove the local-in-time existence with the help of a contracting fixed point theorem. The global existence is obtained thanks to an energy estimate \eqref{energy2}. 
This approach is classical: we refer for instance to \cite{alibaud, JD1}. \\

The plan of this paper is organised as follows. In the next section, we define the notion of mild solution to \eqref{fowlermodif2} and we give some properties on the kernel $K$ of $\I-\partial_{xx}^{2}$ that will be needed in the sequel.              
Sections \ref{uniqueness} and \ref{existence} are, respectively, devoted to the proof of the uniqueness and the existence of a mild solution for \eqref{fowlermodif2}. Section \ref{regularity} contains the proof of the regularity
of the solution. \\

\noindent \textbf{Notations.}\\
- The norm of a measurable function $f \in L^{p}(\R)$ is written $||f ||^p_{L^{p}(\R)} = \int_{\R} |f(x)|^{p} \, dx $ for $1 \leq p < \infty$. \\
- We denote by $\F$ the Fourier transform of $f$ which is defined by: for all $\xi \in \R$
\begin{equation}
\label{TF2}
\F f (\xi):= \int_{\R} e^{-2i\pi x \xi} f(x) dx,
\end{equation}
and $\F^{-1}$ denotes the inverse of Fourier transform. \\
- The Schwartz space of rapidly decreasing functions on $\R$ is denoted by $\SR .$ \\
- We write $C^{k}(\R) = \lbrace f: \R \rightarrow \mathbb{C}; f, f', \cdots, f^{(k) } \mbox{ are continous on } \R \rbrace $.\\
- We denote by $C_b(\R)$ the space of all bounded continuous real-valued functions on $\R$ with the norm $||.||_{L^{\infty}} =  \sup_{\R} |f|$. \\
- We write for any $T > 0$,
$$ C^{1,2}\left( (0,T] \times \R \right):= \lbrace u \in C\left( (0,T] \times \R\right) ; \, \partial_t u, \partial_x u, \partial_{xx}^2 u \in  C\left( (0,T] \times \R ) \right\rbrace .   $$
- We denote by $\mathcal{D}(U)$ the space of test functions on $U$ and $\mathcal{D}'(U)$ denotes the distribution space. 

\bigskip

Here is our main result. \\

\begin{theoreme} Let $T > 0$ and $v_0 \in L^{2}(\R)$.  There exists a unique mild solution $ v \in L^{\infty}\left( (0,T); L^{2}(\R)\right)$ of \eqref{fowlermodif2} (see Definition \ref{mild2}) which satisfies
$$ v \in C\left( [0, T] ; L^{2}(\R) \right)  \mbox{ and } v(0, \cdot) = v_0 \mbox{ almost everywhere.}$$
Moreover, if $\phi \in C^2_b(\R)$ then $ v \in C^{1,2}\left( (0, T] \times \R \right)$ and satisfies 
$$\partial_t v+ \partial_{x}\left( \frac{v^2}{2} + u_{\phi} v \right) + \I[v] - \partial_{xx}^2 v = 0, $$ 
on $(0,T]  \times \R$, in the classical sense or equivalently, $u = u_\phi + v$ is a classical solution of equation \eqref{fowlereqn2}.  \\
\label{theoreme1}
\end{theoreme}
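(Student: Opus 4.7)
The plan is to follow the standard Duhamel/fixed-point scheme sketched in the introduction. Writing $\LLL := \partial_{xx}^2 - \I$ and letting $K(t,\cdot)$ denote its kernel (so that $u \mapsto K(t,\cdot)\ast u$ is the semigroup generated by $\LLL$), I would define a mild solution of \eqref{fowlermodif2} by the Duhamel formula
\begin{equation*}
v(t,\cdot) = K(t,\cdot)\ast v_0 - \int_0^t \partial_x K(t-s,\cdot)\ast \Bigl(\tfrac{v^2}{2}+u_\phi v\Bigr)(s,\cdot)\,ds.
\end{equation*}
For local existence, I would look for a fixed point of the right-hand side in the ball of radius $2\|v_0\|_{L^2}$ in $L^\infty((0,T^*);L^2(\R))$. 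The convolution estimates rely on $\|K(t,\cdot)\|_{L^1}\le C$ and $\|\partial_x K(t,\cdot)\|_{L^1}\le C t^{-1/2}$ (these should be established in Section 2 of the paper from the Fourier representation of $K$, whose symbol has real part $-4\pi^2\xi^2+a_\I|\xi|^{4/3}$ which is bounded above, with quadratic decay at infinity). By Young's inequality and the fact that $u_\phi\in L^\infty$, this gives a contraction for $T^*$ small depending only on $\|v_0\|_{L^2}$ and $\|\phi\|_{L^\infty}$.

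To go from local to global, the key is an a priori $L^2$ energy estimate. Formally multiplying the equation by $v$ and integrating, the Burgers term $v\partial_x(v^2/2)=\partial_x(v^3/3)$ integrates to zero, the transport term yields
\begin{equation*}
\int_\R v\,\partial_x(u_\phi v)\,dx = \tfrac{1}{2}\int_\R \partial_x u_\phi\cdot v^2\,dx,
\end{equation*}
which is controlled by $\tfrac{1}{2}\|\partial_x u_\phi\|_{L^\infty}\|v\|_{L^2}^2$ since $\phi\in C_b^1$. For the nonlocal/diffusive contribution, Plancherel and the oddness in $\xi$ of $\mathrm{Im}\,\psi_\I$ give
\begin{equation*}
\int_\R v\,(\I[v]-\partial_{xx}^2 v)\,dx = \int_\R \bigl(4\pi^2\xi^2 - a_\I|\xi|^{4/3}\bigr)|\F v(\xi)|^2\,d\xi \ge -C\|v\|_{L^2}^2,
\end{equation*}
because $4\pi^2\xi^2-a_\I|\xi|^{4/3}$ is bounded below on $\R$ (the fractional anti-diffusion is of lower order than the Laplacian). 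Combining these estimates and applying Gronwall yields $\|v(t,\cdot)\|_{L^2}^2\le \|v_0\|_{L^2}^2 e^{Ct}$ on any existence interval, which rules out $L^2$ blow-up and lets the local solution be iterated up to any prescribed $T$. To justify the formal computation rigorously, I would run it first on smooth approximations $v_0^n$ (for which the mild solution is smooth enough by the bootstrap below) and pass to the limit using the continuous dependence built into the contraction argument.

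For uniqueness I would subtract two mild solutions and apply the same convolution estimates with Gronwall on $[0,T^*]$ of size depending on $\|\phi\|_{L^\infty}$, then iterate. The continuity $v\in C([0,T];L^2)$ and the initial trace $v(0,\cdot)=v_0$ a.e.\ follow from the strong continuity of the semigroup $K(t,\cdot)\ast\cdot$ on $L^2(\R)$ and the fact that $\partial_x K(t-s,\cdot)\ast(\cdots)(s,\cdot)$ is integrable in $s$ up to $t$.

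Finally, for the regularity statement under $\phi\in C_b^2$, I would use that for every $t>0$ the kernel $K(t,\cdot)$ and all its derivatives belong to $L^1\cap L^\infty$, with bounds of the form $\|\partial_x^k K(t,\cdot)\|_{L^p}\le C t^{-\alpha_{k,p}}$ derived from the Fourier symbol. Differentiating the Duhamel formula once in $t$ (using the equation) and twice in $x$ gives, after a standard bootstrap, that $v\in C^{1,2}((0,T]\times\R)$, and the pointwise equation then holds classically. I expect the delicate step to be the a priori estimate — specifically, ensuring that the anti-diffusive term $\I[v]$ really is absorbed by $-\partial_{xx}^2v$ in $L^2$ and that the non-$L^2$ travelling wave $u_\phi$ only enters through $\|u_\phi\|_{L^\infty}$ and $\|\partial_x u_\phi\|_{L^\infty}$ (so that the perturbation stays in $L^2$).
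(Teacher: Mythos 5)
Your overall architecture (Duhamel formula, contraction mapping for local existence, $L^2$ energy estimate plus iteration for global existence, parabolic bootstrap for classical regularity) is the same as the paper's, and your formal energy computation — Burgers term vanishing, $\int_\R v\,\partial_x(u_\phi v)\,dx=\tfrac12\int_\R \partial_x u_\phi\, v^2\,dx$, and $\int_\R v(\I[v]-\partial_{xx}^2v)\,dx=\int_\R \mathrm{Re}\,\psi_\I\,|\F v|^2\,d\xi\ge -\alpha_0\|v\|_{L^2}^2$ — is exactly the one used to obtain \eqref{energy2}. There is, however, a concrete gap in the estimates you propose for the fixed point: with only $\|K(t,\cdot)\|_{L^1}\le C$ and $\|\partial_x K(t,\cdot)\|_{L^1}\le Ct^{-1/2}$, the quadratic term does not close in $L^2$. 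For $v\in L^\infty((0,T);L^2)$ you only have $v^2\in L^\infty((0,T);L^1)$, and Young with $\partial_x K\in L^1$ lands $\partial_x K\ast v^2$ in $L^1$, not $L^2$. You need the companion bound $\|\partial_x K(t,\cdot)\|_{L^2}\le K_0\,t^{-3/4}$ (item 3 of Proposition \ref{kernel20}) so that $L^2\ast L^1\to L^2$ applies with an integrable singularity $t^{-3/4}$; the $L^1$ bound is then used only for the linear term $u_\phi v\in L^2$. This is repairable from the same Fourier representation, but as stated your contraction does not go through.

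The second place where you diverge from the paper is the justification of the energy estimate. You propose to regularize the initial data and pass to the limit via continuous dependence; this can be made to work (the local existence time depends only on $\|v_0\|_{L^2}$, so the approximations live on a common interval), but you still owe the argument that the approximate solutions are regular enough to multiply the equation by $v$ and integrate — in particular that $\I[v]$ and $\partial_{xx}^2 v$ make sense in $L^2$. The paper avoids approximation entirely: it builds the smoothing into the fixed point by working with the norm $\|v\|_{C([0,T];L^2)}+\sup_t t^{1/2}\|\partial_x v(t,\cdot)\|_{L^2}$, runs a second fixed point for $\partial_x v$ to get $v\in C((0,T];H^2(\R))$ for arbitrary $L^2$ data (Lemma \ref{regularityH2}), and then shows $v\in C^1([0,T];L^2)$ solves the PDE in the distribution sense, after which \eqref{energy2} is rigorous for the solution itself. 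You should either adopt that route or spell out the bootstrap for smooth data and the limit passage; deferring all regularity to "a standard bootstrap" at the end leaves the a priori estimate — which you correctly identify as the delicate step — unsupported at the point where you use it.
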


\section{Duhamel formula and main properties of $K$}

\begin{definition}
\label{mild2}
Let $T>0$ and $v_{0} \in L^{2}(\R)$. We say that $v \in L^\infty((0,T);L^2(\R)) $ is a \textbf{mild solution} to \eqref{fowlermodif2} if for any $t \in \left(0,T\right)$: 
\begin{equation}
v(t,\cdot)=K(t, \cdot)\ast v_{0}- \int^{t}_{0} \partial_{x}K(t-s,\cdot)\ast \left(\frac{v^{2}}{2}+ u_{\phi}v \right)(s,\cdot) \: ds,
\label{duhamel2}
\end{equation}
where $K(t,x)=\F^{-1}\left(e^{-t \psi_{\I}(\cdot)}\right)(x)$
is the kernel of the operator $\I-\partial^2_{xx}$ and $\psi_\I$ is defined in \eqref{psiexpression2}.
\end{definition}

The expression \eqref{duhamel2} is the Duhamel formula and is obtained using the spatial Fourier transform.

\begin{proposition}[Main properties of $K$, \cite{alibaud}] \label{kernel20}
The kernel $K$ satisfies:
\begin{enumerate}
	\item $\forall t>0$, $K(t,\cdot) \in L^{1}\left(\R\right)$ and $K \in C^{\infty}\left((0,\infty)\times \R\right)$,
	\item $\forall s,t >0, \, \, K(s,\cdot)\ast K(t,\cdot)=K(s+t,\cdot)$,\\
	      $\forall u_{0} \in L^2\left(\R\right), \, \, \lim_{t \to 0} K\left(t,\cdot \right)*u_{0}=u_{0} \hspace{0.2 cm} in \hspace{0.2 cm} L^2\left(\R\right)$,
\item $ \forall T>0, \exists K_{0} \hspace{0.2 cm} such \hspace{0.2 cm} that$
$ \forall t \in \left(0,T\right], \hspace{0.3 cm} \vert \vert \partial_{x}K\left(t, \cdot \right)\vert \vert_{L^2\left(\R\right)}\leq K_{0} t^{-3/4}$,
\item $ \forall T>0, \exists K_{1} \hspace{0.2 cm} such \hspace{0.2 cm} that$
$\forall t \in \left(0,T\right], \hspace{0.3 cm} \vert \vert \partial_{x}K\left(t,\cdot \right)\vert \vert_{L^1\left(\R\right)}\leq K_{1} t^{-1/2}$.
\end{enumerate}
\end{proposition}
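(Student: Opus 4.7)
The plan is to reduce everything to Fourier-side computations via $\widehat{K}(t,\xi)=e^{-t\psi_{\I}(\xi)}$, with $\psi_{\I}$ as in \eqref{psiexpression2}. The key preliminary fact is that $\operatorname{Re}\psi_{\I}(\xi)=4\pi^2\xi^2-a_{\I}|\xi|^{4/3}$ admits a finite lower bound $-M$ on $\R$ (the one-variable function $r\mapsto 4\pi^2r^2-a_{\I}r^{4/3}$ attains a negative minimum at a single point), and there exists a threshold $R_0>0$ with $\operatorname{Re}\psi_{\I}(\xi)\geq 2\pi^2\xi^2$ for $|\xi|\geq R_0$. Hence $|\widehat{K}(t,\xi)|\leq e^{tM}$ is bounded on $[0,T]\times\R$ and enjoys Gaussian decay $e^{-2\pi^2 t\xi^2}$ at high frequency, uniformly in $t\in(0,T]$.

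Items 1 and 2 then follow routinely. For item 1, the Gaussian tail yields $\xi^n\widehat{K}(t,\xi)\in L^1$ for every $n\geq 0$ at each $t>0$, making $K(t,\cdot)$ smooth in $x$; differentiation in $t$ inserts polynomially-bounded factors $(-\psi_{\I})^k$ that preserve integrability, so $K\in C^\infty((0,\infty)\times\R)$. To obtain $K(t,\cdot)\in L^1$, I would bound $\langle x\rangle^2 K(t,x)$ in $L^\infty$ by two integrations by parts in $\xi$; the fractional singularities of $\psi_{\I}',\psi_{\I}''$ at zero (of orders $|\xi|^{1/3}$ and $|\xi|^{-2/3}$) remain locally $L^1$. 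Item 2 is immediate: $\widehat{K(s)*K(t)}=e^{-(s+t)\psi_{\I}}=\widehat{K}(s+t)$, and $K(t,\cdot)*u_0\to u_0$ in $L^2$ follows by Plancherel and dominated convergence on $\int|e^{-t\psi_{\I}(\xi)}-1|^2|\widehat{u_0}(\xi)|^2 d\xi$, the integrand being majorized by $(1+e^{TM})^2|\widehat{u_0}|^2\in L^1$.

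For item 3, Plancherel gives
\[ \|\partial_xK(t,\cdot)\|_{L^2}^2=\int 4\pi^2\xi^2 e^{-2t\operatorname{Re}\psi_{\I}(\xi)}\,d\xi. \]
Splitting at $|\xi|=R_0$, the low-frequency part is bounded by a constant depending on $T$, while the high-frequency part is $\leq \int 4\pi^2\xi^2 e^{-4\pi^2 t\xi^2}\,d\xi=ct^{-3/2}$ via the substitution $\xi\mapsto\xi/\sqrt{t}$. Taking square roots yields $K_0 t^{-3/4}$ on $(0,T]$.

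Item 4 is the main obstacle since Plancherel no longer applies. My strategy is to exploit the approximate heat-kernel scaling of $K$: the substitution $\xi=\eta/\sqrt{t}$ in the inverse Fourier representation gives $K(t,x)=t^{-1/2}F(t,x/\sqrt{t})$ and $\partial_xK(t,x)=t^{-1}F'(t,x/\sqrt{t})$, where
\[ F'(t,y)=\int 2i\pi\eta\, e^{-4\pi^2\eta^2+a_{\I}t^{1/3}|\eta|^{4/3}-ib_{\I}t^{1/3}\eta|\eta|^{1/3}}\,e^{2i\pi y\eta}\,d\eta. \]
The change of variables $x=y\sqrt{t}$ yields $\|\partial_xK(t,\cdot)\|_{L^1}=t^{-1/2}\|F'(t,\cdot)\|_{L^1}$, so it suffices to bound $\|F'(t,\cdot)\|_{L^1}$ uniformly for $t\in(0,T]$. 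I would do this via $\|F'(t,\cdot)\|_{L^1}\leq C\|\langle y\rangle^2 F'(t,\cdot)\|_{L^\infty}$ and two integrations by parts in $\eta$; the essential point is that after rescaling the subleading fractional terms of $\psi_{\I}$ acquire positive powers of $t$ (namely $t^{1/3}|\eta|^{4/3}$ and $t^{1/3}|\eta|^{4/3}$), so they contribute only $O(T^\alpha)$ perturbations to the purely Gaussian integrand $e^{-4\pi^2\eta^2}$. The delicate part is the bookkeeping of these corrections together with the integrable but singular behaviour of $\psi_{\I}''\sim|\xi|^{-2/3}$ near the origin, which is precisely what limits us to two integrations by parts and thus to the $\langle y\rangle^2$ weight — just enough to ensure $L^1$ integrability of $F'(t,\cdot)$.
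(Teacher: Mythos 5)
The paper does not actually prove Proposition \ref{kernel20}: it is quoted verbatim from \cite{alibaud} and used as a black box, so there is no in-paper argument to compare yours against. Judged on its own, your proof is correct and follows the standard route for kernels of the form $\F^{-1}(e^{-t\psi})$: the preliminary observations that $\operatorname{Re}\psi_{\I}$ is bounded below and dominated by $2\pi^2\xi^2$ at high frequency are exactly what is needed; items 1--3 are routine as you say (Plancherel plus the split at $|\xi|=R_0$ gives item 3 cleanly); and for item 4 the parabolic rescaling $\xi=\eta/\sqrt{t}$, which renders the quadratic part of the symbol autonomous while the fractional terms pick up the harmless factor $t^{1/3}\le T^{1/3}$, followed by the weighted bound $\|F'(t,\cdot)\|_{L^1}\le C\|\langle y\rangle^2F'(t,\cdot)\|_{L^\infty}$ and two integrations by parts, is essentially the argument used in the cited reference (there sometimes phrased by factoring out the heat kernel, $\partial_xK=\partial_xG_t\ast g_t$, and bounding $\|g_t\|_{L^1}$ uniformly -- same scaling, different packaging). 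Two small points to tidy up: the second subleading term after rescaling is $t^{1/3}\eta|\eta|^{1/3}$, not $t^{1/3}|\eta|^{4/3}$ a second time (same order, so nothing breaks); and the second integration by parts should be justified by excising $(-\epsilon,\epsilon)$ and letting $\epsilon\to0$, which works because the first derivative of the integrand is continuous at the origin (the boundary terms cancel) and the $|\eta|^{-2/3}$ singularity of $\Phi''$ appears multiplied by $\eta$ or is integrable on its own. With those details written out, the proof is complete.
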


\begin{figure}[h!]
	\centering
	\includegraphics[width=12cm,height=68mm]{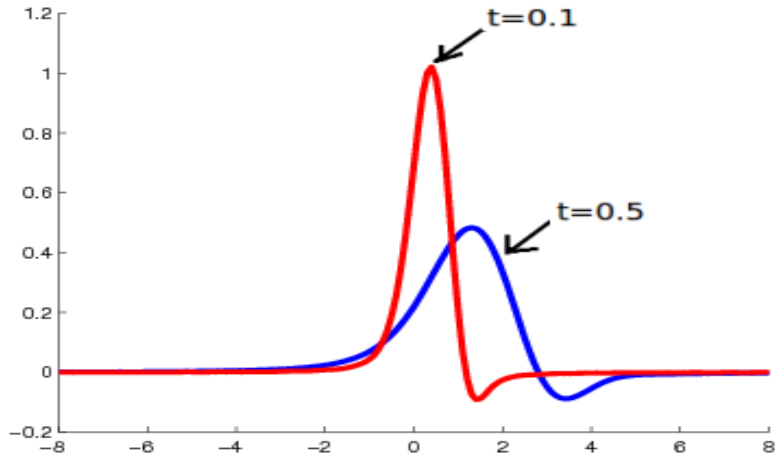} 
	\caption{Evolution of the kernel K for $t=0.1$ (red) and $t=0.5$ s (blue) }
	\label{kernelfig20}
\end{figure}

\begin{remark}
An interesting property for the kernel $K$ is the non-positivity (see Figure \ref{kernelfig20}) and the main consequence of this feature is the failure of maximum principle \cite{alibaud}. We use again this property to show that the constant solutions of the Fowler equation are unstable \cite{afaf}.

\end{remark}

\begin{remark}
\label{remarkestimation}
Using Plancherel formula, we have for any $v_0 \in L^{2}(\R)$ and any $t \in (0,T]$
\begin{equation*}
||K(t, \cdot) \ast v_{0} ||_{L^{2}(\R)} \leq e^{\alpha_0 t} ||v_{0} ||_{L^{2}(\R)},
\end{equation*}
where $\alpha_0 = - \min \mbox{Re}(\psi_{\I}) >0 $.
\end{remark}

\begin{proposition}[Integral formula for $\I$ \label{intfor2}]
For all $\varphi \in \SR$ and all $x \in \R$,
\begin{equation}
\I[\varphi](x) = \frac{4}{9} \int_{-\infty}^{0} \frac{\varphi(x+z) - \varphi(x) - \varphi'(x)z}{|z|^{7/3}} \, dz.
\label{integralformula}
\end{equation}
\end{proposition}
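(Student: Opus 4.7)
The plan is to derive \eqref{integralformula} from \eqref{nonlocalterm} by two successive integrations by parts, using the Taylor expansion of $\varphi$ at $x$ to regularise the singularity at $z=0$ each time we differentiate a negative power of $|z|$. First, I would change variable $\xi=-z$ in \eqref{nonlocalterm} to rewrite
$$
\I[\varphi](x)=\int_{-\infty}^{0}|z|^{-1/3}\,\varphi''(x+z)\,dz.
$$

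For the first integration by parts I would rewrite $\varphi''(x+z)=\dfrac{d}{dz}\bigl(\varphi'(x+z)-\varphi'(x)\bigr)$. The constant $-\varphi'(x)$ is inserted precisely to make the boundary term vanish fast enough at $z=0$. Using $\dfrac{d}{dz}|z|^{-1/3}=\tfrac{1}{3}|z|^{-4/3}$ on $(-\infty,0)$, the boundary term at $z=0^-$ is controlled by $|z|^{-1/3}\bigl(\varphi'(x+z)-\varphi'(x)\bigr)=O(|z|^{2/3})\to 0$, and at $-\infty$ it vanishes by Schwartz decay, so
$$
\I[\varphi](x)=-\frac{1}{3}\int_{-\infty}^{0}|z|^{-4/3}\bigl(\varphi'(x+z)-\varphi'(x)\bigr)\,dz.
$$
For the second integration by parts I would likewise write $\varphi'(x+z)-\varphi'(x)=\dfrac{d}{dz}\bigl(\varphi(x+z)-\varphi(x)-\varphi'(x)z\bigr)$. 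Now $\dfrac{d}{dz}|z|^{-4/3}=\tfrac{4}{3}|z|^{-7/3}$ on $(-\infty,0)$; the boundary term at $0^-$ is controlled by $|z|^{-4/3}\cdot O(z^{2})=O(|z|^{2/3})\to 0$, while at $-\infty$ the worst term is $|z|^{-4/3}\cdot|\varphi'(x)z|=O(|z|^{-1/3})\to 0$. Combining the two constants $-\tfrac{1}{3}\cdot\bigl(-\tfrac{4}{3}\bigr)=\tfrac{4}{9}$ yields exactly \eqref{integralformula}.

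The only delicate point is the bookkeeping of the boundary contributions: neither $|z|^{-1/3}$ nor $|z|^{-4/3}$ is integrable down to $z=0$ on its own, and the whole argument relies on the Taylor remainders $\varphi'(x+z)-\varphi'(x)$ and $\varphi(x+z)-\varphi(x)-\varphi'(x)z$ providing the extra $|z|$ and $|z|^{2}$ of vanishing that keep every intermediate integral absolutely convergent. Since $\varphi\in\SR$ the corresponding decay at $-\infty$ is automatic, so once this regularisation is in place the computation is routine.
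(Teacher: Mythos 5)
Your proof is correct and follows essentially the same route as the paper: two successive integrations by parts on the half-line, inserting the Taylor-expansion constants $-\varphi'(x)$ and then $-\varphi(x)-\varphi'(x)z$ precisely so that the boundary terms at the singularity vanish, producing the factor $\tfrac{1}{3}\cdot\tfrac{4}{3}=\tfrac{4}{9}$. The only cosmetic difference is that you perform the change of variables $\xi=-z$ at the start rather than at the end, and you spell out the boundary estimates that the paper only mentions in passing.
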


\begin{proof}
The proof is based on simple integrating by parts. The regularity and the rapidly decreasing of  $\varphi$ ensure the validity of the computations that follow. We have
\begin{eqnarray*}
\int_{0}^{+ \infty} \varphi''(x-\xi) |\xi |^{-1/3} \, d\xi &=& \int_{0}^{+ \infty} \frac{d}{d\xi} \left( \varphi'(x) -\varphi'(x-\xi) \right)  |\xi |^{-1/3} \, d\xi , \\
& = & \frac{1}{3} \int_{0}^{+\infty} |\xi |^{-4/3} \left( \varphi'(x) -\varphi'(x-\xi) \right) \, d\xi, \\
& = & \frac{1}{3} \int_{0}^{+\infty} |\xi |^{-4/3} \frac{d }{d\xi} \left( \varphi'(x) \xi + \varphi(x-\xi)-\varphi(x) \right) \, d\xi , \\
&=& \frac{4}{9} \int_{0}^{+ \infty} \frac{ \varphi(x-\xi)-\varphi(x) + \varphi'(x)\xi }{ |\xi |^{7/3} } \, d\xi , \\
&=& \frac{4}{9} \int_{-\infty}^{0} \frac{ \varphi(x + \xi)-\varphi(x) - \varphi'(x)\xi }{ |\xi |^{7/3} } \, d\xi .
\end{eqnarray*}
There is no boundary term at infinity (resp. at zero) because $\varphi$ is a rapidly decreasing function on $\R$ (resp. $\varphi$ is smooth). 
\end{proof}

\begin{remark}
Using integral formula \eqref{integralformula}, \cite{alibaud,alvarez} proved that 
\begin{equation*}
\F \left( \I[\varphi] \right)(\xi) = 4 \pi^2 \Gamma(\frac{2}{3})|\xi| ^{4/3}  \left( -\frac{1}{2}+ i \frac{\sqrt{3}}{2} \, sgn (\xi) \right) \F \varphi(\xi).
\end{equation*}
Notice that $ \F \left( \I[\varphi] \right)(\xi) = 4 \pi^2 \Gamma(\frac{2}{3}) (i \xi) ^{4/3} $ which is consistent with Remark \ref{liouville0}: up to a multiplicative constant $\I[\varphi]$ is $\bigskip \frac{d^{4/3}\varphi}{dx^{4/3}}$.
\label{fourier0}
\end{remark}

\begin{proposition}
Let $s\in \R$ and $\varphi \in  H^s(\R)$. Then $\I[\varphi] \in H^{s-4/3}(\R)$ and we have 
\begin{equation}
||\I[\varphi]||_{H^{s-4/3}(\R)} \leq 4 \pi^2 \Gamma(\frac{2}{3}) ||\varphi||_{H^s(\R)}.
\end{equation}
\label{estimnonlocal}
\end{proposition}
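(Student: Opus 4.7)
The plan is to reduce the estimate to a pointwise inequality on the Fourier side, using the explicit Fourier symbol for $\I$ provided in Remark \ref{fourier0}.

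First I would recall that the $H^s$ norm admits the standard Fourier characterization
$$\|\varphi\|_{H^s(\R)}^2 = \int_\R (1+|\xi|^2)^s |\F\varphi(\xi)|^2\, d\xi,$$
and similarly for $H^{s-4/3}$. By Remark \ref{fourier0},
$$\F(\I[\varphi])(\xi) = 4\pi^2 \Gamma(2/3)\, |\xi|^{4/3} \left(-\tfrac{1}{2} + i\tfrac{\sqrt{3}}{2}\operatorname{sgn}(\xi)\right) \F\varphi(\xi),$$
and the complex factor $-\tfrac{1}{2} + i\tfrac{\sqrt{3}}{2}\operatorname{sgn}(\xi)$ has modulus $1$. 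Hence
$$|\F(\I[\varphi])(\xi)|^2 = (4\pi^2 \Gamma(2/3))^2\, |\xi|^{8/3}\, |\F\varphi(\xi)|^2.$$

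Next I would plug this into the $H^{s-4/3}$ norm of $\I[\varphi]$ to get
$$\|\I[\varphi]\|_{H^{s-4/3}(\R)}^2 = (4\pi^2 \Gamma(2/3))^2 \int_\R (1+|\xi|^2)^{s-4/3}\, |\xi|^{8/3}\, |\F\varphi(\xi)|^2\, d\xi,$$
and then compare with $\|\varphi\|_{H^s(\R)}^2$ via the elementary pointwise inequality $|\xi|^{8/3} \leq (1+|\xi|^2)^{4/3}$, obtained by raising $|\xi|^2 \leq 1+|\xi|^2$ to the power $4/3$. Multiplying by $(1+|\xi|^2)^{s-4/3}$ gives $(1+|\xi|^2)^{s-4/3}|\xi|^{8/3} \leq (1+|\xi|^2)^s$, so the integrand is dominated by $(4\pi^2\Gamma(2/3))^2(1+|\xi|^2)^s|\F\varphi(\xi)|^2$ and integration yields the claim.

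There is essentially no obstacle here: the computation is direct once the Fourier symbol is known and its modulus is identified. The only minor point is to justify Remark \ref{fourier0} for general $\varphi \in H^s(\R)$ rather than for $\varphi \in \SR$; this is handled by density of $\SR$ in $H^s(\R)$ together with the continuity of the Fourier multiplier operator $\varphi \mapsto \F^{-1}(4\pi^2 \Gamma(2/3)|\xi|^{4/3}(-1/2 + i\tfrac{\sqrt{3}}{2}\operatorname{sgn}(\xi)) \F\varphi)$ from $H^s$ to $H^{s-4/3}$, which is precisely what the pointwise bound above establishes.
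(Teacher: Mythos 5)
Your argument is correct and is essentially identical to the paper's own proof: both pass to the Fourier characterization of the $H^s$ norms, use the symbol from Remark \ref{fourier0} (whose angular factor has modulus $1$), and conclude via the pointwise bound $|\xi|^{8/3} \leq (1+|\xi|^2)^{4/3}$, which the paper writes equivalently as $\bigl(|\xi|^2/(1+|\xi|^2)\bigr)^{4/3} \leq 1$. Your closing remark on extending from $\SR$ to $H^s(\R)$ by density is a small additional justification the paper leaves implicit.
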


\begin{proof}
For all $s\in \R$ and all $\varphi \in  H^s(\R)$, we have, using Remark \ref{fourier0}
\begin{eqnarray*}
||\I[\varphi]||_{H^{s-4/3}(\R)} &=& \left( \int_\R (1 + |\xi|^2)^{s-4/3} |\F(\I[\varphi])(\xi)|^2 \, d\xi   \right)^{1/2}, \\
&=& 4 \pi^2 \Gamma(\frac{2}{3})  \left( \int_\R (1 + |\xi|^2)^{s-4/3} |\frac{1}{2}-i \mbox{ sgn}(\xi) \frac{\sqrt{3}}{2}| |\xi |^{8/3} |\F(\varphi)(\xi)|^2 \, d\xi \right)^{1/2} , \\
&=& 4 \pi^2 \Gamma(\frac{2}{3}) \left( \int_\R \left( \frac{|\xi|^2}{ 1+ |\xi|^2}\right) ^{4/3} (1 + |\xi|^2)^s | \F(\varphi)(\xi)|^2 \, d\xi \right)^{1/2}, \\
&\leq & 4 \pi^2 \Gamma(\frac{2}{3}) \left[ \int_\R (1 + |\xi|^2)^s | \F(\varphi)(\xi)|^2 \, d\xi \right]^{1/2}, \\
&=& 4 \pi^2 \Gamma(\frac{2}{3}) ||\varphi||_{H^s(\R)}.
\end{eqnarray*}
\end{proof}

\begin{remark} From the previous Proposition, we deduce that for all 
$s\in \R$ and all $\varphi \in  H^s(\R)$, $\I[\varphi] \in H^{s-4/3}(\R)$. In particular, using the Sobolev embedding $H^{2/3}(\R) \hookrightarrow C_b(\R) \cap L^{2}(\R)$, we deduce that $\I: H^2(\R) \rightarrow C_b(\R) \cap L^{2}(\R) $ is a bounded linear operator.
\label{remarkfourier}
\end{remark}

\begin{proposition}[Duhamel formula \eqref{duhamel2} is well-defined]\label{welldefined}
Let $T>0$, $v_{0} \in L^{2}(\R)$ and  $w \in  L^\infty((0,T);L^{1}(\R))  +  L^\infty((0,T);L^{2}(\R)) $. Then, the function 
\begin{equation}
v: t \in (0,T] \rightarrow K(t,\cdot) \ast v_0 - \int_{0}^{t}
\partial_x K (t-s,\cdot) \ast w(s,\cdot)ds
\end{equation}
is well-defined and belongs to $C([0,T];L^2(\R))$ ( being extended at $t = 0$ by the value $v(0, \cdot) = v_0$ ). 
\end{proposition}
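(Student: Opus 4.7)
The plan is to verify each of the two pieces of the Duhamel formula separately: the free-evolution term $K(t,\cdot)\ast v_0$ and the nonlinear integral term, and then assemble the continuity statement.

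For the first piece, I would use the Fourier-side representation $\F(K(t,\cdot))(\xi) = e^{-t\psi_\I(\xi)}$. Combined with Remark \ref{remarkestimation}, this immediately gives $K(t,\cdot)\ast v_0\in L^2(\R)$ with norm bounded by $e^{\alpha_0 t}\|v_0\|_{L^2(\R)}$. Continuity on $(0,T]$ follows by Plancherel and dominated convergence in $\xi$, the pointwise convergence being clear and the uniform domination $|e^{-(t+h)\psi_\I(\xi)}-e^{-t\psi_\I(\xi)}|^2|\F v_0(\xi)|^2\le 4e^{2(T+1)\alpha_0}|\F v_0(\xi)|^2$ giving an integrable majorant. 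Continuity at $t=0$ is exactly Property 2 of Proposition \ref{kernel20}.

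For the integral term, I would write $w = w_1 + w_2$ with $w_1\in L^\infty((0,T);L^1(\R))$ and $w_2\in L^\infty((0,T);L^2(\R))$ and estimate by Young's convolution inequality:
\begin{align*}
\|\partial_x K(t-s,\cdot)\ast w_1(s,\cdot)\|_{L^2(\R)} &\le \|\partial_x K(t-s,\cdot)\|_{L^2(\R)}\,\|w_1(s,\cdot)\|_{L^1(\R)} \le K_0(t-s)^{-3/4}\|w_1\|_{L^\infty L^1},\\
\|\partial_x K(t-s,\cdot)\ast w_2(s,\cdot)\|_{L^2(\R)} &\le \|\partial_x K(t-s,\cdot)\|_{L^1(\R)}\,\|w_2(s,\cdot)\|_{L^2(\R)} \le K_1(t-s)^{-1/2}\|w_2\|_{L^\infty L^2},
\end{align*}
using Properties 3 and 4 of Proposition \ref{kernel20}. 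Since both exponents $-3/4$ and $-1/2$ are strictly larger than $-1$, the integrals $\int_0^t (t-s)^{-3/4}ds$ and $\int_0^t (t-s)^{-1/2}ds$ are finite, so the Bochner integral is absolutely convergent in $L^2(\R)$ with norm controlled by $C(T)(t^{1/4}\|w_1\|_{L^\infty L^1}+t^{1/2}\|w_2\|_{L^\infty L^2})$.

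For continuity, given $0\le t<t+h\le T$ I would split the integral defining $v(t+h)-v(t)$ into $\int_t^{t+h}$ and $\int_0^t$. The first piece goes to $0$ with $h$ by the uniform $L^2$ bound above. For the second piece, after the change of variables $r=t+h-s$ and $r=t-s$ respectively, the difference is dominated by $\int_0^t \|\partial_x K(r+h,\cdot)-\partial_x K(r,\cdot)\|_{L^2\text{ or }L^1}\|w(\cdot)\|$ which tends to $0$ by dominated convergence (the integrand is pointwise a.e. zero since $K$ is smooth in $t>0$ by Property 1, and it is dominated by a sum of integrable singular factors from Properties 3--4 uniformly for small $h$). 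At $t=0$ the integral term vanishes and the first term reduces to $v_0$ by Property 2.

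The only mildly delicate point I anticipate is organising the dominating function for the continuity of the integral term, since the kernels $\partial_x K(\cdot,t)$ blow up as $t\to 0^+$; everything else reduces to Young's inequality and Proposition \ref{kernel20}.
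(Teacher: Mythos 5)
Your proposal is correct and follows essentially the same route as the paper: Young's inequality together with the gradient estimates of Proposition \ref{kernel20} for well-definedness, and a splitting of the increment into $\int_0^t$ (handled by dominated convergence applied to the $L^1$ or $L^2$ norm of the kernel difference, with the uniform majorant $2K_i r^{-3/4}$ or $2K_i r^{-1/2}$) plus $\int_t^{t+h}$ (handled by the $O(h^{1/4})$, $O(h^{1/2})$ bounds) for continuity. The only cosmetic difference is that you decompose $w=w_1+w_2$ explicitly, whereas the paper encodes the two cases via exponents $i+j=3$; the substance is identical.
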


\begin{proof}[Proof]
From Proposition \ref{kernel20}, it easy to see that $v$ is well-defined and that for any $t \in (0,T]$, $v(t, \cdot) \in L^{2}(\R)$. Indeed, $\forall t>0, \partial_{x}K(t,\cdot) \in L^{1}(\R)\cap L^{2}(\R)$ so by Young inequalities $\partial_{x}K(t,\cdot) \ast w(t,\cdot)$ exists and using the estimates on the gradient (item 3 and 4 of Proposition \ref{kernel20}) we deduce that $v$ is well-defined and $v(t,\cdot) \in L^2(\R)$. \\ 

Let us prove the continuity of $v$. By the second item of Proposition \ref{kernel20}, we have that the function $t \in (0,T]\rightarrow K(t,\cdot) \ast v_{0}$ is continuous and it is extended continuously up to $t=0$ by the value $v(0,\cdot)=v_0$.
We define the function
$$F:t \in [0,T]\rightarrow \int_{0}^{t}
\partial_x K (t-s,\cdot) \ast w(s,\cdot)ds.$$
Now, we are going to prove that $F$ is uniformly continuous. For any $h > 0$, Young inequalities imply
\begin{eqnarray} 
||F(t+h,\cdot)-F(t,\cdot)||_{L^2} 
& \leq & \int_{0}^{t} \n \partial_{x}K(t+h-s,\cdot)-\partial_{x}K(t-s,\cdot) \n_{L^{i}} \, ds \, \n w \n_{L^\infty((0,T);L^{j})}   \nonumber \\  
& + & \int_{t}^{t+h} \n \partial_{x}K(t+h-s,\cdot) \n_{L^{i}} \, ds \, \n w \n_{L^\infty((0,T);L^{j})}, \label{inegalite1}  
\end{eqnarray}
where $i,j \in \mathbb{N}^{*}$ are such that $i+j=3$. Since $\partial_{x} K(t, \cdot)=\F^{-1}(\xi \rightarrow 2 i \pi \xi e^{-t \psi_{\I}(\xi)})$, the dominated convergence theorem implies that
$$\n \partial_{x}K(t-s+h, \cdot)-\partial_{x}K(t-s,\cdot) \n_{L^{i}(\R)}\rightarrow 0, \mbox{ as } h\rightarrow 0.$$
Moreover, using the estimates on the gradient (item 3 and 4 of Proposition \ref{kernel20}), we have the following inequality
$$\int_{t}^{t+h}\n \partial_{x} K(t-s+h,\cdot) \n_{L^{j}(\R)} ds \leq c_{j} h^{\alpha_j}, $$
where $c_{j}$ is a positive constant and $\alpha_j = \left\{
    \begin{array}{ll}
        1/2 & \mbox{if } j=1 \\
        1/4 & \mbox{if } j=2 
    \end{array}
\right.
$.

From \eqref{inegalite1}, we obtain that $||F(t+h,\cdot)-F(t,\cdot)||_{L^2(\R)} \rightarrow 0, \mbox{ as } h\rightarrow 0$. Hence, the function $F$ is continuous and this completes the proof of the continuity of $v$.  
\end{proof}

\begin{remark} Using the semi-group property of the kernel $K$, we have for all $t_0 \in (0,T)$ and all $t \in [0, T-t_0]$,  \cite{alibaud} 
\begin{equation*}
v(t + t_0, \cdot) = K(t,\cdot) \ast v(t_0, \cdot) - \int_{0}^{t}
\partial_x K (t-s,\cdot) \ast w(t_0 + s,\cdot) \, ds.
\end{equation*}

\label{semigroupe}
\end{remark}

\section{Uniqueness of a solution \label{uniqueness}}

Let us first establish the following Lemma.

\begin{lemme}
Let $T>0$ and $v_0 \in L^2(\R)$. For $i =1, 2$, let $w_i \in  L^\infty((0,T);L^{1}(\R)) \cup L^\infty((0,T);L^{2}(\R)) $ and define $v_i$ as in Proposition \ref{welldefined} by:
\begin{equation*}
v_i(t, \cdot) = K(t,\cdot) \ast v_0 - \int_{0}^{t}
\partial_x K (t-s,\cdot) \ast w_i(s,\cdot) \, ds.
\end{equation*}
Then, 
\begin{equation*}
||v_1 - v_2 ||_{C([0,T]; L^{2}(\R))} \leq \left\{
    \begin{array}{ll}
        4 K_0 T^{1/4} ||w_1 - w_2 ||_{L^\infty((0,T);L^{1}(\R))}  & \mbox{if } w_i \in L^\infty((0,T);L^{1}(\R)), \\
        2 K_1 \sqrt{T} \, ||w_1 - w_2 ||_{L^\infty((0,T);L^{2}(\R))} & \mbox{if } w_i \in L^\infty((0,T);L^{2}(\R)).
    \end{array}
\right.
\end{equation*}
\end{lemme}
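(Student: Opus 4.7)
The plan is to exploit the linear dependence of $v_i$ on $w_i$ so that the initial-data term cancels in the difference. Explicitly,
\begin{equation*}
(v_1 - v_2)(t,\cdot) = -\int_{0}^{t} \partial_x K(t-s,\cdot) \ast (w_1 - w_2)(s,\cdot) \, ds,
\end{equation*}
so everything reduces to controlling the $L^2$-norm of this convolution integral uniformly in $t \in [0,T]$.

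First I would use Minkowski's integral inequality to pull the $L^2_x$-norm inside the $s$-integral, obtaining
\begin{equation*}
\|(v_1 - v_2)(t,\cdot)\|_{L^2(\R)} \leq \int_{0}^{t} \|\partial_x K(t-s,\cdot) \ast (w_1 - w_2)(s,\cdot)\|_{L^2(\R)} \, ds.
\end{equation*}
Then I would apply Young's convolution inequality $\|f \ast g\|_{L^2} \leq \|f\|_{L^p}\|g\|_{L^q}$ with $\tfrac{1}{p}+\tfrac{1}{q} = \tfrac{3}{2}$, choosing $(p,q)=(2,1)$ in the first case and $(p,q)=(1,2)$ in the second case. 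This reduces the integrand to a product of a kernel norm and a perturbation norm, the latter being uniformly bounded in $s$ by $\|w_1 - w_2\|_{L^\infty((0,T);L^j)}$.

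The kernel norms are then controlled by items 3 and 4 of Proposition \ref{kernel20}. In the first case, $\|\partial_x K(t-s,\cdot)\|_{L^2} \leq K_0 (t-s)^{-3/4}$, and the change of variables $\tau = t-s$ gives
\begin{equation*}
\int_0^t (t-s)^{-3/4}\, ds = \int_0^t \tau^{-3/4}\, d\tau = 4\, t^{1/4} \leq 4\, T^{1/4}.
\end{equation*}
In the second case, $\|\partial_x K(t-s,\cdot)\|_{L^1} \leq K_1 (t-s)^{-1/2}$ integrates to $2 t^{1/2} \leq 2 T^{1/2}$. Taking the supremum over $t \in [0,T]$ on the left-hand side (which is legitimate since $v_1 - v_2 \in C([0,T];L^2(\R))$ by Proposition \ref{welldefined}) yields exactly the two claimed inequalities.

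There is no real obstacle here; the argument is almost mechanical once the difference formula is written down. The only minor point worth checking is that the singularities $(t-s)^{-3/4}$ and $(t-s)^{-1/2}$ of the kernel at $s=t$ are integrable — which they are, precisely because the exponents $3/4$ and $1/2$ are strictly less than $1$. This integrability is exactly what produces the positive powers $T^{1/4}$ and $T^{1/2}$ in the final estimates, and it is also what ensures that the estimate is genuinely contractive for small $T$ (a fact that will be essential for the fixed-point argument in the existence section).
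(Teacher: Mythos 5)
Your argument is correct and coincides with the paper's own proof: both write the difference as the single convolution integral, apply Young's inequality with the pairings $(L^2,L^1)$ and $(L^1,L^2)$, bound $w_1-w_2$ by its $L^\infty$-in-time norm, and integrate the kernel estimates of Proposition \ref{kernel20} to obtain exactly the constants $4K_0T^{1/4}$ and $2K_1\sqrt{T}$. No issues.
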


\begin{proof}
For all $t \in [0,T]$, we have
\begin{equation*}
v_1(t, \cdot) - v_2(t, \cdot) = - \int_{0}^{t} \partial_x K(t-s, \cdot) \ast (w_1 - w_2)(s, \cdot) \, ds.
\end{equation*}
Hence with the help of Young inequalities, we get
\begin{equation*}
||v_1(t, \cdot) - v_2(t, \cdot) ||_{L^{2}(\R)} \leq \left\{
    \begin{array}{ll}
        \int_{0}^{t} ||\partial_x K(t-s, \cdot) ||_{L^{2}(\R)} ||(w_1 - w_2)(s, \cdot)||_{L^{1}(\R)} \, ds \\ 
         \hspace{6 cm} \mbox{if } w_i \in L^\infty((0,T);L^{1}(\R)), \\
         \int_{0}^{t} ||\partial_x K(t-s, \cdot) ||_{L^{1}(\R)} ||(w_1 - w_2)(s, \cdot)||_{L^{2}(\R)} \, ds \\ 
        \hspace{6 cm} \mbox{if } w_i \in L^\infty((0,T);L^{2}(\R)).
    \end{array}
\right.
\end{equation*}
It then follows that 
\begin{equation*}
||v_1(t, \cdot) - v_2(t, \cdot) ||_{L^{2}(\R)} \leq \left\{
    \begin{array}{ll}
        \int_{0}^{t} ||\partial_x K(t-s, \cdot) ||_{L^{2}(\R)} \, ds \, ||w_1 - w_2||_{L^\infty((0,T);L^{1}(\R))} \\ 
         \hspace{6 cm} \mbox{if } w_i \in L^\infty((0,T);L^{1}(\R)), \\
        \int_{0}^{t} ||\partial_x K(t-s, \cdot) ||_{L^{1}(\R)} \, ds \, ||w_1 - w_2||_{L^\infty((0,T);L^{2}(\R))} \\               \hspace{6 cm} \mbox{if } w_i \in L^\infty((0,T);L^{2}(\R)).
    \end{array}
\right.
\end{equation*}
Using again the estimates of the gradient of $K$ (see Proposition \ref{kernel20}), we conclude the proof of this Lemma. 
\end{proof}

\begin{proposition}
Let $T>0$ and $ v_0 \in L^2(\R)$. There exists at most one $v \in L^\infty((0,T);L^{2}(\R))$ which is a mild solution to \eqref{fowlermodif2}. 
\end{proposition}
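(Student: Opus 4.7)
The plan is to show that any two mild solutions $v_1, v_2 \in L^\infty((0,T); L^2(\R))$ sharing the common initial datum $v_0$ must coincide on a short initial interval $[0,\tau]$, and then iterate via the semigroup identity of Remark \ref{semigroupe} to cover all of $[0,T]$.

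First I would form the difference, which from the Duhamel formula \eqref{duhamel2} reads
\begin{equation*}
v_1(t,\cdot)-v_2(t,\cdot) = -\int_0^t \partial_x K(t-s,\cdot) \ast (w_1-w_2)(s,\cdot) \, ds,
\end{equation*}
with $w_i = \tfrac{v_i^2}{2} + u_\phi v_i$. The natural splitting $w_1-w_2 = \tfrac{1}{2}(v_1+v_2)(v_1-v_2) + u_\phi(v_1-v_2)$ puts the quadratic piece in $L^\infty((0,T);L^1(\R))$ (by Cauchy--Schwarz, since $v_1,v_2 \in L^\infty L^2$) and the linear piece in $L^\infty((0,T);L^2(\R))$ (because $\phi \in C^1_b(\R)$ makes $u_\phi$ bounded). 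Splitting the Duhamel integral into two pieces accordingly and applying the preceding Lemma to each (equivalently, using Young's inequality together with items 3 and 4 of Proposition \ref{kernel20}) yields, for every $\tau \in (0,T]$,
\begin{equation*}
\|v_1-v_2\|_{L^\infty((0,\tau);L^2)} \le \left[ 2 K_0 \bigl(\|v_1\|_{L^\infty L^2}+\|v_2\|_{L^\infty L^2}\bigr)\tau^{1/4} + 2 K_1 \|u_\phi\|_{L^\infty} \sqrt{\tau} \right] \|v_1-v_2\|_{L^\infty((0,\tau);L^2)}.
\end{equation*}

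Choosing $\tau$ small enough that the bracketed factor is strictly less than $1$ forces $v_1 = v_2$ on $[0,\tau]$. Crucially, this threshold depends only on $K_0$, $K_1$, $\|u_\phi\|_{L^\infty}$ and the (finite, by assumption) global $L^\infty L^2$ norms of $v_1$ and $v_2$, hence is uniform in the base point. I can therefore restart the argument at times $\tau, 2\tau, \dots$: Remark \ref{semigroupe} recasts each restart as a Duhamel problem of the same form with starting datum the (now common) value $v_1(k\tau,\cdot)=v_2(k\tau,\cdot)$, and finitely many iterations reach $T$.

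I do not anticipate a substantive obstacle. The only mildly delicate point is the coexistence of a quadratic nonlinearity, whose difference naturally sits in $L^1$, with the lower-order transport-like term $u_\phi v$ whose difference sits only in $L^2$ (since $u_\phi$ need not decay at infinity); this is precisely why both the $L^1$ and the $L^2$ bounds on $\partial_x K$ of Proposition \ref{kernel20} are needed, and why the preceding Lemma was stated with two alternative hypotheses.
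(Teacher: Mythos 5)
Your proposal is correct and follows essentially the same route as the paper: the same splitting of $w_1-w_2$ into a quadratic $L^1$ part and a linear $L^2$ part, the same contraction estimate with coefficient $2K_0 M \tau^{1/4}+2K_1\|u_\phi\|_{L^\infty}\sqrt{\tau}$, and a continuation via the semigroup property of Remark \ref{semigroupe}. The only cosmetic difference is that the paper extends by a supremum-and-contradiction argument (relying on the continuity of $v_1,v_2$ in $L^2$ furnished by Proposition \ref{welldefined}) whereas you iterate in finitely many uniform steps, which is equivalent since, as you note, the threshold depends only on the global $L^\infty L^2$ norms.
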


\begin{proof}
Let $v_1, v_2 \in L^\infty((0,T);L^{2}(\R))$ be two mild solutions to \eqref{fowlermodif2} and $t \in [0,T]$.
Using the previous Lemma, we get
\begin{equation*}
||v_1 - v_2 ||_{C([0,t]; L^{2}(\R))} \leq 2 K_0 t^{1/4} ||v_1^2 - v_2^2 ||_{L^\infty((0,t);L^{1}(\R))} + 2 K_1 \sqrt{t} ||u_\phi v_1 -u_\phi v_2 ||_{L^\infty((0,t);L^{2}(\R))}.
\end{equation*}  
Since,
\begin{equation}
||v_1^2 - v_2^2 ||_{L^\infty((0,t);L^{1}(\R))} \leq M ||v_1 - v_2 ||_{C([0,t]; L^{2}(\R))}
\label{Mmax}
\end{equation}
with $M = ||v_1||_{C([0,T]; L^{2}(\R))} + ||v_2||_{C([0,T]; L^{2}(\R))}, $ \\
then
\begin{equation*}
||v_1 - v_2 ||_{C([0,t]; L^{2}(\R))} \leq (2 M K_0 t^{1/4} + 2 K_1 t^{1/2}||u_\phi ||_{C^1_b(\R)})||v_1 - v_2 ||_{C([0,t]; L^{2}(\R))}  .
\end{equation*} 
Therefore, $v_1 = v_2$ on $[0,t]$ for any $t \in (0,T]$ satisfying $2 M K_0 t^{1/4} + 2 K_1 t^{1/2}||u_\phi ||_{C^1_b(\R)} < 1 $.  
Since $v_1$ and $v_2$ are continuous with values in $L^2(\R)$, we have that $ v_1 = v_2 $ on $[0, T_{*}]$ where $T_{*}$ is the positive solution of the following equation \\ 
$$2 M K_0 t^{1/4} + 2 K_1 t^{1/2}||u_\phi ||_{C^1_b(\R)} = 1, $$ 
i.e. $T_{*} = ( \frac{-2M K_0 + \sqrt{ 4 M^2 K_0^2 + 8 K_1 ||u_\phi||_{C^{1}_b(\R)}}} {4 K_1 ||u_\phi||_{C^{1}_b(\R) }} )^{4}. $  \\
To prove that $v_1 = v_2 $ on  $[0,T]$, let us define 
\begin{equation*}
t_0 := \sup \lbrace t \in [0, T] \mbox{ s.t } v_1 = v_2 \hspace{0.1 cm} [0, t] \rbrace
\end{equation*}
and we assume that $t_0 < T$. By continuity of $v_1$ and $v_2$, we have that $v_1(t_0, \cdot) = v_2(t_0, \cdot) $. 
Using the semi-group property, see Remark \ref{semigroupe}, we deduce that $v_1(t_0 + \cdot, \cdot) = v_2(t_0 + \cdot, \cdot) $ are mild solutions to \eqref{fowlermodif2} with the same initial data $ v_1(t_0, \cdot) = v_2(t_0, \cdot)$ which implies, from the first step of this proof, that $v_1(t, \cdot) = v_2(t, \cdot) $ for $t \in [t_0, T_{*} +t_0]$. Finally, we get a contradiction with the definition of $t_0$ and we infer that $t_0 = T$. This completes the proof of this proposition. 
\end{proof}

\section{Global-in-time existence of a mild solution \label{existence}}

\begin{proposition}[local-in-time existence]  \label{existlocal}
Let $v_0 \in L^2(\R)$. There exists $T_\ast>0$ that only depends on
$||v_0||_{L^2(\R)}$ and $\n u_\phi \n_{C_{b}^{1}(\R)}$ such that \eqref{fowlermodif2} admits a unique
mild solution $v \in C([0,T_\ast]; L^{2}(\R)) \cap C((0,T_\ast]; H^{1}(\R)) .$ 
Moreover, $v$ satisfies
\begin{equation*}
\sup_{t \in (0,T_*]} t^{1/2} ||\partial_x v(t, \cdot)||_{L^2(\R)} < + \infty.
\end{equation*}
\end{proposition}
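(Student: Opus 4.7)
The plan is to solve \eqref{duhamel2} by a Banach fixed point argument for the Duhamel map
\[
\Phi(v)(t,\cdot) := K(t,\cdot) \ast v_0 - \int_0^t \partial_x K(t-s,\cdot) \ast \left(\tfrac{v^2}{2} + u_\phi v\right)(s,\cdot)\, ds,
\]
in a Banach space that already encodes the regularity announced by the proposition. I would set
\[
X_T := \left\{ v \in C([0,T]; L^2(\R)) : \partial_x v \in C((0,T]; L^2(\R)),\ \sup_{0<t\leq T} t^{1/2}||\partial_x v(t,\cdot)||_{L^2(\R)} < +\infty \right\},
\]
with norm $||v||_{X_T} := \sup_{[0,T]}||v(t,\cdot)||_{L^2(\R)} + \sup_{(0,T]} t^{1/2}||\partial_x v(t,\cdot)||_{L^2(\R)}$, and look for a fixed point of $\Phi$ in a closed ball $B_R \subset X_T$ with $R$ chosen proportional to $||v_0||_{L^2(\R)}$.

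First I would check that $\Phi(B_R) \subset B_R$. The linear term $K(t)\ast v_0$ is controlled in $L^2$ by Remark \ref{remarkestimation}, and in weighted $H^1$ by writing $\partial_x(K(t)\ast v_0) = \partial_x K(t)\ast v_0$ and applying Young's inequality together with item 4 of Proposition \ref{kernel20} to get $t^{1/2}||\partial_x(K(t)\ast v_0)||_{L^2(\R)} \leq K_1 ||v_0||_{L^2(\R)}$, which fixes the admissible value of $R$. For the Duhamel integral I split the nonlinearity into $v^2/2 \in L^1$ (with $||v^2(s)||_{L^1} \leq R^2$) and $u_\phi v \in L^2$ (with $||u_\phi v(s)||_{L^2} \leq ||u_\phi||_{C^1_b} R$), and apply Young's inequality with $||\partial_x K(t-s)||_{L^2} \leq K_0(t-s)^{-3/4}$ on the first piece and $||\partial_x K(t-s)||_{L^1} \leq K_1(t-s)^{-1/2}$ on the second. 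The elementary time integrals $\int_0^t (t-s)^{-3/4}ds = 4t^{1/4}$ and $\int_0^t(t-s)^{-1/2}ds = 2t^{1/2}$ then yield an $L^2$-bound of order $C(R,||u_\phi||_{C^1_b})(T^{1/4}+T^{1/2})$, small when $T$ is small.

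The weighted gradient bound is the delicate step. I would transfer the spatial derivative onto the nonlinearity, writing $\partial_x(v^2/2) = v\,\partial_x v$ and $\partial_x(u_\phi v) = (\partial_x u_\phi) v + u_\phi \partial_x v$. On $B_R$ one then has, by Cauchy-Schwarz, $||v\,\partial_x v(s)||_{L^1} \leq R^2 s^{-1/2}$, and $||\partial_x(u_\phi v)(s)||_{L^2} \leq ||u_\phi||_{C^1_b} R(1+s^{-1/2})$. Pairing with $\partial_x K$ in the appropriate Lebesgue space produces the Beta-type integrals
\[
\int_0^t (t-s)^{-3/4} s^{-1/2} \, ds = B(\tfrac{1}{4},\tfrac{1}{2})\, t^{-1/4}, \qquad \int_0^t (t-s)^{-1/2} s^{-1/2} \, ds = \pi,
\]
which, after multiplication by the $t^{1/2}$ weight of $X_T$, give vanishing powers $t^{1/4}$ and $t^{1/2}$; hence the weighted gradient norm of the nonlinear contribution is also $\leq R/2$ for $T_\ast$ small enough, depending only on $||v_0||_{L^2}$ and $||u_\phi||_{C^1_b}$.

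The contraction estimate then uses the factorisations $v_1^2 - v_2^2 = (v_1+v_2)(v_1-v_2)$ and $u_\phi(v_1-v_2)$ and reproduces the same convolution bounds with one factor replaced by $v_1-v_2$, producing a Lipschitz constant bounded by $C(R,||u_\phi||_{C^1_b})(T^{1/4}+T^{1/2})$, which is strictly less than $1$ after a final shrinking of $T_\ast$. Banach's theorem then gives a unique $v \in B_R$; continuity of $\Phi(v)$ into $L^2(\R)$ — needed so that $B_R$ is stable inside $C([0,T];L^2)$ — follows from Proposition \ref{welldefined} applied to $w = v^2/2 + u_\phi v \in L^\infty((0,T); L^1(\R)) + L^\infty((0,T); L^2(\R))$, and uniqueness in the ambient class $L^\infty((0,T);L^2(\R))$ was already established in Section \ref{uniqueness}. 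The main obstacle is the weighted gradient estimate: the $t^{1/2}$ weight just barely balances the combined singularity of $\partial_x K$ and of the product $v\,\partial_x v$, and one has to track the convolution exponents precisely so that every time-integral converges and every residual power of $T$ that appears is strictly positive.
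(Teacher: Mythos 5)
Your proposal follows essentially the same route as the paper: the same weighted norm $\sup_{[0,T]}\|v(t,\cdot)\|_{L^2}+\sup_{(0,T]}t^{1/2}\|\partial_x v(t,\cdot)\|_{L^2}$, the same Duhamel fixed-point map on a ball, the same splitting of the nonlinearity into an $L^1$ part $v^2/2$ paired with $\|\partial_x K\|_{L^2}\lesssim t^{-3/4}$ and an $L^2$ part $u_\phi v$ paired with $\|\partial_x K\|_{L^1}\lesssim t^{-1/2}$, and the same Beta-function integrals after transferring one derivative onto the nonlinearity. The only step you pass over is the verification that the gradient of the Duhamel integral is actually continuous on $(0,T]$ with values in $L^2$ (needed for membership in $C((0,T_\ast];H^1(\R))$), which the paper handles by truncating the time integral at $s>\delta$ and passing to a local uniform limit; this is routine and does not change the argument.
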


\begin{proof}
For $v \in C([0,T]; L^{2}(\R)) \cap C((0,T]; H^{1}(\R))$, we consider the following norm
\begin{equation}
\n \vert v \n \vert :=||v||_{C([0,T];L^2(\R))}+\sup_{t \in (0,T]}
t^\frac{1}{2} ||\partial_{x} v(t,\cdot)||_{L^2(\R)}
\label{norme}
\end{equation}   
and we define the affine space 
$$
X:= \left\{v \in C([0,T];L^2(\R)) \cap C((0,T];H^1(\R))
\mbox{ s.t. $v(0,\cdot)=v_0$ and $|||v||| <+\infty$} \right\}.
$$
It is readily seen that $X$ endowed with the distance induced by the norm $|||\cdot|||$ is a complete metric space. For $v \in X$, we define the function
\begin{equation*}
\Theta v: t \in [0,T] \rightarrow K(t,\cdot) \ast v_0 -
\frac{1}{2} \int_{0}^{t}
\partial_x K (t-s,\cdot) \ast v^2(s,\cdot) \, ds -\int_{0}^{t}
\partial_x K (t-s,\cdot) \ast u_\phi v(s,\cdot) \, ds.
\end{equation*}
From Proposition \ref{welldefined}, $\Theta v \in C([0,T]; L^2(\R))$ and satisfies $\Theta v(0,\cdot)=v_{0}$.\\ \\
\noindent \textbf{First step: $\Theta v \in X$.}
Since 
$$
\partial_{x} (K(t,\cdot) \ast v_0)=\partial_x K(t, \cdot) \ast v_0=\F^{-1}(\xi \mapsto 2i \pi \xi e^{-t \psi_{\I}(\xi)} \F v_0(\xi)),
$$
the dominated convergence theorem implies that for any $t_0 >0$, 
$$
\int_\R  4 \pi^2 |\xi|^2 \left|e^{-t \psi_\I(\xi)}-e^{-t_0
\psi_\I(\xi)}\right|^2 | \F v_0(\xi)|^2 d\xi \rightarrow 0, \quad
\mbox{as $t \rightarrow t_0$}.
$$
Therefore, the function $t>0 \rightarrow (\xi \mapsto 2i \pi \xi e^{-t \psi_{\I}(\xi)} \F v_0(\xi)) \in L^{2}(\R)$ is continuous and since $\F$ is an isometry of $L^2$, we deduce that $t>0 \rightarrow \partial_x K(t,\cdot) \ast v_0 \in L^2(\R)$ is continuous. We have then established that $t>0 \rightarrow K(t,\cdot) \ast v_0 \in H^1(\R)$ is continuous. Moreover, from Proposition \ref{kernel20}, we have
\begin{equation}
||\partial_x K(t, \cdot) \ast v_0 ||_{L^2(\R)} \leq K_1 t^{-1/2} ||v_0||_{L^2(\R)}.
\label{gradientestimate}
\end{equation}
Let $w$ denote the function
\begin{equation*}
w(t,\cdot)=\frac{1}{2} \int_{0}^{t}
\partial_x K (t-s,\cdot) \ast v^2(s,\cdot)ds +\int_{0}^{t}
\partial_x K (t-s,\cdot) \ast u_{\phi} v(s,\cdot)ds.
\end{equation*}
Let us now prove that $w \in C((0,T]; H^{1}(\R))$. We first have
$$
\partial_{x} w(t, \cdot)=\int_{0}^{t} \partial_x K (t-s,\cdot) \ast v \partial_{x}v(s,\cdot)ds + \int_{0}^{t}
\partial_x K (t-s,\cdot) \ast \partial_{x}(u_{\phi} v)(s,\cdot)ds .
$$
Using Young inequalities and Proposition \ref{kernel20}, we get
\begin{eqnarray*} 
\n \partial_{x} w(t,\cdot) \n_{L^{2}(\R)} & \leq & \int_{0}^{t} \n \partial_x K (t-s,\cdot)\ast v \partial_{x}v(s,\cdot) \n_{L^{2}(\R)} ds \nonumber \\
&\,&  \hspace{2 cm} +  \int_{0}^{t} \n \partial_x K (t-s,\cdot)\ast \partial_{x}(u_\phi v)(s,\cdot)\n_{L^{2}(\R)} ds,  \nonumber \\
&\leq &  \int_{0}^{t} \n \partial_x K (t-s,\cdot)\n_{L^{2}(\R)} \n v \partial_{x}v(s,\cdot) \n_{L^{1}(\R)} ds  \nonumber  \\ 
& \, & \hspace{2 cm} + \int_{0}^{t} \n \partial_x K (t-s,\cdot) \n_{L^{1}(\R)} \n \partial_{x}(u_{\phi} v)(s,\cdot) \n_{L^{2}(\R)} ds, \nonumber \\
& \leq & \n v\n_{C \left( [0,T];L^{2}(\R) \right) } \int_{0}^{t} K_0 (t-s)^{-3/4}s^{-1/2} ds \sup_{s \in (0,T]} s^{1/2} \n \partial_{x} v(s,.)\n_{L^2(\R)} \nonumber \\
& \, & \hspace{2 cm} + \int_{0}^{t}K_1 (t-s)^{-1/2}s^{-1/2} ds \sup_{s \in (0,T]} s^{1/2} \n \partial_{x} (u_{\phi} v)(s,\cdot)\n_{L^2(\R)}. \nonumber \\
\end{eqnarray*}
We then obtain
\begin{eqnarray} \label{estim}
\n \partial_{x} w(t,\cdot) \n_{L^{2}(\R)} & \leq & K_0 I \n v\n_{C([0,T];L^{2}(\R))} T^{-1/4} \sup_{s \in (0,T]} s^{1/2} \n \partial_{x} v(s,\cdot)\n_{L^2(\R)} \nonumber \\
& \, & \hspace{2 cm} + K_1 J \sup_{s \in (0,T]} s^{1/2} \n \partial_{x}(u_{\phi} v)(s,\cdot) \n_{L^{2}(\R)}, 
\end{eqnarray}
where $I=B(\frac{1}{2},\frac{1}{4})$ and $J=B(\frac{1}{2},\frac{1}{2})=\pi$, $B$ being the beta function defined by
\begin{equation*}
B(x,y):= \int_{0}^{1} t^{x-1} (1-t)^{y-1} dt.
\end{equation*}
As $|||v|||< \infty$ then 
$$\sup_{s \in (0,T]} s^{1/2} \n \partial_{x} v(s,\cdot)\n_{L^2(\R)}< \infty \mbox{ and } \sup_{s \in (0,T]} s^{1/2} \n \partial_{x}(u_\phi v)(s,\cdot) \n_{L^{2}(\R)} < \infty.$$ 
We then deduce that $\partial_{x} w(t,\cdot)$ is in $L^2$ and so $\partial_{x} \Theta v(t,\cdot) \in L^{2}(\R)$ for all $t \in (0,T]$. \\

Let us now prove that $\partial_{x} w$ is continuous on $(0,T]$ with values in $L^2$. \\
For $\delta >0$ and $t \in (0,T]$, we define
\begin{eqnarray*}
(\partial_{x} w)_{\delta}(t,\cdot)&:=&\int_{0}^{t}
\partial_x K (t-s,\cdot) \ast 1_{ \lbrace s>\delta \rbrace}(v \partial_x v)(s,\cdot) \,ds \\ 
&\,& \hspace{1 cm} + \int_{0}^{t}
\partial_x K (t-s,\cdot) \ast 1_{ \lbrace s>\delta \rbrace}\partial_{x}(u_{\phi} v)(s,\cdot) \, ds.
\end{eqnarray*}
Since $1_{ \lbrace s>\delta \rbrace}(v \partial_x v)(s,\cdot) \in L^{\infty}([0,T]; L^{1}(\R))$ and $1_{ \lbrace s>\delta \rbrace }\partial_{x}(u_{\phi} v)(s,\cdot) \in L^{\infty}([0,T]; L^{2}(\R))$ then Proposition \ref{welldefined} implies that $(\partial_x w)_{\delta}: [0,T]\rightarrow L^{2}(\R)$ is continuous.
Moreover, we have for any $t \in (0,T] $ and $\delta \leq t$,
\begin{eqnarray*}
||\partial_x w(t,\cdot)-(\partial_x w)_\delta(t,\cdot)||_{L^2} & \leq & K_0 \int_{0}^{\delta} (t-s)^{-3/4}s^{-1/2} \, ds \n v \n_{C \left( [0,T];L^{2} \right) } \sup_{s \in (0,T]} s^{1/2} \n \partial_{x} v(s,\cdot) \n_{L^{2}} \\ 
&\,& \hspace{1 cm} + \, K_1 \int_{0}^{\delta} (t-s)^{-1/2}s^{-1/2}ds \sup_{s \in (0,T]} s^{1/2} \n \partial_{x} (u_{\phi} v)(s,\cdot) \n_{L^{2}}.
\end{eqnarray*}
It then follows that
\begin{equation*}
\sup_{t \in (0,T]} ||\partial_x w(t,\cdot)-(\partial_x w)_\delta(t,\cdot)||_{L^2(\R)}\rightarrow 0 \mbox{ as } \delta \rightarrow 0.
\end{equation*}
We next infer that $\partial_x w \in C((0,T];L^2(\R))$ because it is a local uniform limit of continuous functions. Hence, we have established that $\Theta v \in C([0,T]; L^{2}(\R)) \cap C((0,T]; H^{1}(\R)) $. To prove that $\Theta v \in X$, it remains to show that $|||\Theta v |||< +\infty$.  
Using \eqref{gradientestimate} and \eqref{estim}, we have
\begin{eqnarray} \label{estime}
\sup_{t \in (0,T]} t^{1/2} \n \partial_x \Theta v(t, \cdot) \n_{L^{2}} &\leq& K_1 \n v_0 \n_{L^{2}}+ K_0 I T^{1/4} 
\sup_{s \in (0,T]} s^{1/2} \n \partial_x v(s, \cdot) \n_{L^{2}} \n v \n_{C \left(  [0,T];L^{2} \right) } \nonumber \\
&+& K_1 J T^{1/2} \sup_{s \in (0,T]} s^{1/2} \n \partial_x (u_{\phi} v)(s, \cdot) \n_{L^{2}}.
\end{eqnarray}
Finally, we have $\Theta: X \longrightarrow X$.\\

\noindent \textbf{Second step:}
We begin by considering a ball of $X$ of radius $R$ centered at the origin 
$$
B_{R}:=\left\{v \in X  \, \, / \, \,  \vert \n v \vert \n \leq R \right\} 
$$ 
where $R> \n v_{0} \n_{L^{2}(\R)}+ K_1 \n v_0 \n_{L^{2}(\R)}$. Take $v \in B_{R}$ and let us now prove that $\Theta$ maps $B_{R}$ into itself. We have
$$ 
||\Theta v(t,\cdot)||_{L^2(\R)} \leq \n K(t,\cdot) \ast v_0 \n_{L^{2}(\R)}+  \int^{t}_{0} \n \partial_{x} K(t-s,\cdot)\ast \left(\frac{v^{2}}{2}+u_{\phi} v\right)(s,\cdot)\n_{L^{2}(\R)} \: ds . 
$$ 
By Remark \ref{remarkestimation}, we get
\begin{equation} \label{estimation}
\n K(t,\cdot) \ast v_0 \n_{L^{2}(\R)} \leq e^{\alpha_{0}T} \n v_{0} \n_{L^{2}(\R)},
\end{equation}
where $\alpha_{0}=-\min \mbox{Re}(\psi_{\I})>0$. Moreover, since $\n v^{2} \n_{L^\infty((0,T);L^1(\R))}=\n v \n^2_{L^\infty((0,T);L^{2}(\R))}$ and with the help of Proposition \ref{kernel20}, we get
\begin{eqnarray} \label{estima}
||\Theta v (t,\cdot)||_{L^2(\R)} & \leq & e^{\alpha_{0}T} \n v_{0} \n_{L^{2}(\R)}+ 2K_0 T^{1/4} R^{2} +2 K_1 T^{1/2} \n u_\phi \n_{C^{1}_{b}(\R)} R.
\end{eqnarray}
Using \eqref{estime} and \eqref{estima}, we deduce that 
\begin{eqnarray*}
|||\Theta v||| &\leq& e^{\alpha_{0}T} \n v_{0} \n_{L^{2}(\R)}+ K_1 \n v_{0} \n_{L^{2}(\R)}+ (2+ I)K_0 T^{1/4} R^2+(2+J)R K_1 T^{1/2} \n u_\phi \n_{C^{1}_{b}(\R)}  \\ 
&\,&  \hspace{0 cm} + K_1 J\n u_\phi \n_{C^{1}_{b}(\R)} R T.\\
\end{eqnarray*}
Therefore, for $T>0$ sufficiently small such that
\begin{eqnarray} \label{Tsmall}
&\,& e^{\alpha_{0}T} \n v_{0} \n_{L^{2}(\R)}+ K_1 \n v_{0} \n_{L^{2}(\R)}+ (2+ I)K_0 T^{1/4} R^2+(2+J)R K_1 T^{1/2} \n u_\phi \n_{C^{1}_{b}(\R)} \nonumber \\
&\,& \hspace{3 cm} + \, K_1 J\n u_\phi \n_{C^{1}_{b}(\R)} R T \leq R,
\end{eqnarray}
we get that $|||\Theta v||| \leq R$. \\
To finish with, we are going to prove that $\Theta$ is a contraction. \\
For $v,w \in B_{R}$, we have for any $t \in (0,T)$ 
\begin{eqnarray*} 
\n \Theta v (t, \cdot)-\Theta w (t, \cdot) \n_{L^{2}(\R)} & \leq &  \frac{1}{2} \int^{t}_{0} \n \partial_{x} K(t-s,\cdot)\n_{L^{2}(\R)} 
\n (v^2-w^2)(s,\cdot) \n_{L^1(\R)} ds \\ 
&\,& \hspace{1.5 cm} + \int^{t}_{0}  \n \partial_{x} K(t-s,\cdot)\n_{L^{1}(\R)} \n u_\phi (v-w)(s,\cdot) \n_{L^{2}(\R)} ds, \\
&\leq & 2 K_0 t^{1/4} \n v^2-w^2\n_{C([0,T];L^1(\R))} \\
&\, & \hspace{1.5 cm} + \, 2 K_1 t^{1/2} \n u_\phi \n_{C^{1}_{b}(\R)}\n v-w\n_{C([0,T];L^2(\R))}, 
\end{eqnarray*} 
and since,
\begin{eqnarray*}
\n v^2-w^2\n_{C([0,T];L^1(\R))} & \leq & (\n v\n_{C([0,T];L^2(\R))}+\n w\n_{C([0,T];L^2(\R))})\n v-w\n_{C([0,T];L^2(\R))}, \\
& \leq & 2 R \n v-w\n_{C([0,T];L^2(\R))},
\end{eqnarray*}
we get
\begin{equation} \label{estim2}
\n \Theta v(t, \cdot)-\Theta w(t, \cdot) \n_{L^{2}(\R)} \leq  (4R K_0 t^{1/4}+2K_1 t^{1/2} \n u_\phi \n_{C^{1}_{b}(\R)}) \n v-w\n_{C([0,T];L^2(\R))}.
\end{equation}
Moreover
\begin{eqnarray*}
\n \partial_{x}(\Theta v-\Theta w)(t,\cdot) \n_{L^2(\R)} & \leq & \frac{1}{2} \int^{t}_{0} \n \partial_{x} K(t-s,\cdot) \ast \partial_{x}(v^{2}-w^{2})(s,\cdot)\n_{L^{2}(\R)} ds \\ 
& \,& \hspace{1 cm} + \int^{t}_{0} \n \partial_{x} K(t-s,\cdot) \ast \partial_{x}(u_\phi (v-w))(s,\cdot)\n_{L^{2}(\R)} ds, \\
& \leq & K_0 I t^{-1/4} \sup_{s \in (0,T]} s^{1/2} \n (v \partial_x v-w\partial_x w)(s, \cdot) \n_{L^{1}(\R)} \\ 
&\,& \hspace{1 cm} + \, K_1 J \sup_{s \in (0,T]} s^{1/2} \n \partial_x \left( u_{\phi}(v -w) \right) (s,\cdot) \n_{L^{2}(\R)}.
\end{eqnarray*}
And since
\begin{equation*}
\n (v \partial_x v-w\partial_x w )(t,\cdot)\n_{L^{1}} \leq \n \partial_x w(t,\cdot) \n_{L^2} \n (v-w)(t,\cdot) \n_{L^2 }+ \n v(t,\cdot) \n_{L^2} \n \partial_x (v-w)(t,\cdot) \n_{L^2},  
\end{equation*}
then
\begin{eqnarray*}
t^{1/2} \n (v \partial_x v-w\partial_x w)(t,\cdot) \n_{L^{1}} &\leq & \n (v-w)(t,\cdot) \n_{L^2} |||w||| + |||v||| t^{1/2} \n \partial_x (v-w)(t,\cdot) \n_{L^2}, \\
&\leq & 2 R |||v-w|||.   
\end{eqnarray*}
Therefore, we obtain
\begin{eqnarray} \label{estim3}
\n \partial_{x}(\Theta v-\Theta w)(t,\cdot) \n_{L^2(\R)}  &\leq& 2 K_0 I t^{-1/4}R |||v-w||| + K_1 J ||u_\phi ||_{C^1_b (\R)} T^{1/2} |||v-w|||\nonumber \\ 
&+&   K_1 J ||u_\phi||_{C^1_b (\R)}|||v-w|||.     
\end{eqnarray}
Finally, using \eqref{estim2} and \eqref{estim3}, we get
\begin{eqnarray*}
|||\Theta v -\Theta w |||& \leq& [(2 + I)2R K_0 T^{1/4}+ (2+J) \n u_\phi \n_{C^1_b (\R)} K_1 T^{1/2} \\
&\,& \hspace{1 cm} + \, K_1 J T ||u_\phi ||_{C^1_b (\R)}]|||v-w|||. \\
\end{eqnarray*}

\noindent \textbf{Last step: conclusion.}
For any $T_\ast>0$ sufficiently small such that \eqref{Tsmall} holds true and $$(2+I)2 R K_0 T_\ast^{1/4}+ (2+J) \n u_\phi \n_{C^1_b (\R)} K_1 T_\ast^{1/2}+K_1 J T_\ast ||u_\phi ||_{C^1_b (\R)}<1,$$ $\Theta$ is a contraction from $B_R$ into itself. The Banach fixed point theorem then implies that $\Theta$ admits a unique fixed point $v \in C([0,T_\ast]; L^{2}(\R)) \cap C((0,T_\ast]; H^{1}(\R))$ which is a mild solution to \eqref{fowlermodif2}.  

\end{proof}

\begin{lemme}[Regularity $H^2$ of $v(t, \cdot) $]
Let $v_0 \in L^2(\R)$ and $\phi \in C^2_b(\R)$. There exists $T_\ast'>0$ that only depends on
$||v_0||_{L^2(\R)}$ and $\n u_\phi \n_{C_{b}^{2}(\R)}$ such that \eqref{fowlermodif2} admits a unique
mild solution $v \in C([0,T_\ast']; L^{2}(\R)) \cap C((0,T_\ast']; H^{2}(\R)) .$ 
Moreover, $v$ satisfies 
\begin{equation*}
\sup_{t \in (0,T'_*]} t^{1/2} ||\partial_{x} v(t, \cdot) ||_{L^2(\R)} < + \infty \hspace{0.3 cm} \mbox{ and } \hspace{0.3 cm} \sup_{t \in (0,T'_*]} t ||\partial^2_{xx} v(t, \cdot) ||_{L^2(\R)} < + \infty.
\end{equation*}
\label{regularityH2}
\end{lemme}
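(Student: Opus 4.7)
The plan is to rerun the Banach fixed-point argument of Proposition \ref{existlocal} in a slightly stronger setting that also controls the second spatial derivative. On the space $C([0,T];L^2(\R)) \cap C((0,T]; H^2(\R))$ I would introduce the augmented norm
$$|||v|||_2 := |||v||| + \sup_{t \in (0,T]} t\,\n \partial_{xx}^2 v(t,\cdot)\n_{L^2(\R)},$$
where $|||\cdot|||$ is the norm \eqref{norme}, and consider the affine space of $v$ with $v(0,\cdot)=v_0$ and $|||v|||_2 < +\infty$, which is again a complete metric space. The Duhamel map $\Theta$ from Proposition \ref{existlocal} is the candidate contraction; the task is to show that $\Theta$ sends a closed ball $B_R$ into itself and is Lipschitz with constant strictly less than one, provided $T_\ast'$ is small enough in terms of $\n v_0 \n_{L^2(\R)}$ and $\n u_\phi \n_{C^2_b(\R)}$.

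The $|||\cdot|||$ part of the estimate is already covered by Proposition \ref{existlocal}. For the new piece $t\,\n \partial_{xx}^2 \Theta v\n_{L^2}$, the linear contribution $K(t,\cdot)\ast v_0$ is controlled through the semi-group identity $\partial_{xx}^2 K(t,\cdot) = \partial_x K(t/2,\cdot)\ast \partial_x K(t/2,\cdot)$, which together with item 4 of Proposition \ref{kernel20} yields $\n \partial_{xx}^2 K(t,\cdot)\n_{L^1(\R)} \leq 2K_1^2\,t^{-1}$, hence $t\,\n \partial_{xx}^2 K(t,\cdot)\ast v_0 \n_{L^2(\R)} \leq 2K_1^2\,\n v_0 \n_{L^2(\R)}$. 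For the nonlinear Duhamel term, I would differentiate twice to get
$$\partial_{xx}^2 \int_0^t \partial_x K(t-s,\cdot)\ast g(s,\cdot)\,ds = \int_0^t \partial_x K(t-s,\cdot) \ast \partial_{xx}^2 g(s,\cdot)\,ds,$$
with $g = v^2/2 + u_\phi v$, and use the pointwise identities $\partial_{xx}^2(v^2/2) = v\,\partial_{xx}^2 v + (\partial_x v)^2 \in L^1(\R)$ and $\partial_{xx}^2(u_\phi v) = (\partial_{xx}^2 u_\phi) v + 2(\partial_x u_\phi)(\partial_x v) + u_\phi\,\partial_{xx}^2 v \in L^2(\R)$ to bound the integrand. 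The new hypothesis $\phi \in C^2_b(\R)$ enters precisely through the latter identity, which requires the second derivative of $u_\phi$ to be bounded.

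The main obstacle is the time integrability at $s=0$: inside $B_R$ the best pointwise control on the second derivative is $\n \partial_{xx}^2 v(s,\cdot)\n_{L^2} \leq R/s$, and combined with the $L^2$ or $L^1$ bounds on $\partial_x K(t-s,\cdot)$ from Proposition \ref{kernel20} this produces a non-integrable $s^{-1}$ singularity. To cure this I would split the time integral at $t/2$. On $(t/2, t]$, every power of $s^{-1}$ is dominated by $(t/2)^{-1}$, so after convolution with the kernel one obtains a factor $C\,t^{-\alpha}$ with $\alpha < 1$, which is then absorbed by the outer prefactor $t$. On $(0, t/2]$, I would instead move the derivatives back onto the kernel using $(t-s) \geq t/2$ together with only the $C([0,T];L^2(\R))$ and $C((0,T];H^1(\R))$ bounds already provided by Proposition \ref{existlocal}; equivalently, one may apply the semi-group identity of Remark \ref{semigroupe} with $t_0 = t/2$, which recenters the problem at the datum $v(t/2,\cdot) \in H^1(\R)$ and removes the singularity at the origin entirely. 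The contraction estimate for $|||\Theta v - \Theta w|||_2$ then follows along the same lines as in Proposition \ref{existlocal} with analogous additions, and the Banach fixed-point theorem delivers the unique mild solution with the claimed regularity and weighted bounds on a small interval $[0, T_\ast']$.
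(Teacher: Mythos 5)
Your argument is sound in outline but takes a genuinely different route from the paper. You run a single fixed-point iteration for $v$ itself in a space augmented by the weight $t\,\n \partial_{xx}^2 v(t,\cdot)\n_{L^2(\R)}$, which obliges you to (i) derive second-derivative kernel bounds through the semigroup factorisation $\partial_{xx}^2K(t,\cdot)=\partial_xK(t/2,\cdot)\ast\partial_xK(t/2,\cdot)$, and (ii) tame the non-integrable $s^{-1}$ singularity by splitting the Duhamel integral at $t/2$ and redistributing derivatives between the kernel and the nonlinearity on $(0,t/2]$; both devices work, and the splitting is exactly the right cure for the obstruction you correctly identify. The paper instead bootstraps: it takes the solution $v\in C((0,T_*];H^1(\R))$ already furnished by Proposition \ref{existlocal}, fixes $t_0\in(0,T_*)$, and runs a second fixed-point iteration for an unknown $w$ (meant to be $\partial_x v(t_0+\cdot,\cdot)$) with initial datum $w_0=\partial_x v(t_0,\cdot)\in L^2(\R)$, using the once-differentiated Duhamel formula in which the known function $\bar v(t,\cdot)=v(t_0+t,\cdot)$ enters only as a coefficient; the fixed point is then identified with $\partial_x\bar v$ by uniqueness, whence $v(t,\cdot)\in H^2(\R)$ for $t>0$. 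That route only ever meets the integrable singularity $s^{-1/2}(t-s)^{-3/4}$ and reuses the first-derivative estimates of Proposition \ref{kernel20} verbatim, at the cost of a two-stage structure and of tracking how $T_*'$ depends on $t_0$ through $\n \bar v\n_{C([t_0,T'_*];H^1(\R))}$ and $\n w_0\n_{L^2(\R)}$. Your approach buys a self-contained one-shot construction; the paper's buys shorter estimates. If you write yours up in full, do also verify the continuity $\Theta v\in C((0,T_*'];H^2(\R))$ by the cutoff argument of Proposition \ref{existlocal}, and check that the contraction bound for difference terms such as $(v-w)\partial_{xx}^2v+w\,\partial_{xx}^2(v-w)$ survives the same $t/2$-splitting (it does, since each factor carries at worst the weights $s^{-1}$ or $s^{-1/2}$ already handled).
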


\begin{proof}
To prove this result, we use again a contracting fixed point theorem. But this time, it is the gradient of the solution $v$ which is searched as a fixed point.  \\
From Proposition \ref{existlocal}, there exists $T_\ast>0$ which depends on $||v_0||_{L^{2}(\R)}$ and $||u_\phi||_{C^{1}(\R)}$ such that $v \in C([0,T_\ast]; L^{2}(\R)) \cap C((0,T_\ast]; H^{1}(\R)) $ is a mild solution to \eqref{fowlermodif2}. Since $v \in C((0,T_\ast]; H^{1}(\R))$,
 we can consider the gradient of $v(t, \cdot)$ for any $t \in (0, T_*]$. Let then 
$t_0 \in (0,T_*)$ and $T^{'}_* \in (0,T_*-t_0]$. We consider the same complete metric space $X$ defined in the proof of Proposition \ref{existlocal} and we take the norm $\n \vert \cdot \n \vert$ defined in \eqref{norme}:
\begin{equation*}
X:= \left\{w \in C([0,T^{'}_*];L^2(\R)) \cap C((0,T^{'}_*];H^1(\R))
\mbox{ s.t. $w(0,\cdot)=w_0$ and $|||w||| <+\infty$} \right\},
\end{equation*} 
with the initial data $w_0 = \partial_x v(t_0, \cdot) $. \\
We now wish to apply the fixed point theorem at the following function
\begin{eqnarray*}
\Theta w: t \in [0,T^{'}_*] & \rightarrow & K(t,\cdot) \ast w_0 -
\int_{0}^{t}
\partial_x K (t-s,\cdot) \ast  \left( \bar{v} w \right) (s,\cdot)ds \\
& \, & \hspace{1 cm} - \int_{0}^{t} \partial_x K (t-s,\cdot) \ast \left( \partial_x( u_{\phi})  \bar{v} \right)(s, \cdot) \, ds \\ 
& \, & \hspace{1 cm} - \int_{0}^{t} \partial_x K (t-s,\cdot) \ast \left( u_{\phi} w \right) (s, \cdot) \, ds  , 
\end{eqnarray*}
where $\bar{v}(t, \cdot) := v(t_0 + t, \cdot) $. 
First, we leave to the reader to verify that $\Theta $ maps $X$ into itself. 
The proof is similar to the one given in Proposition \ref{existlocal}.  \\
\noindent 
For any $w \in X$, we have from Young inequalities and Remark \ref{remarkestimation}
\begin{eqnarray*}
|| \Theta w(t, \cdot) ||_{L^{2}(\R)} & \leq & e^{\alpha_0 T'_*} ||w_0||_{L^{2}(\R)} + ||\bar{v}||_{C \left([t_0, T^{'}_*]; H^{1}(\R) \right) }||| w ||| \int_{0}^{t} ||\partial_x K(t-s, \cdot)||_{L^{2}(\R)} ds \\
& \, & \hspace{1 cm} +  \, ||u_ {\phi} ||_{C^1_b(\R)} || \bar{v} ||_{C \left([t_0, T^{'}_*]; H^{1}(\R) \right) } \int_{0}^{t} ||\partial_x K(t-s, \cdot)||_{L^{1}(\R)} ds \\
& \, & \hspace{1 cm} + \, ||u_ {\phi} ||_{C^1_b(\R)} ||| w ||| \int_{0}^{t} ||\partial_x K(t-s, \cdot)||_{L^{1}(\R)} ds, 
\end{eqnarray*}
and from Proposition \ref{kernel20}, we get
\begin{eqnarray}
|| \Theta w(t, \cdot) ||_{L^{2}} & \leq & e^{\alpha_0 T'_*} ||w_0||_{L^{2}} + 4 K_0 T'^{1/4}_* ||\bar{v}||_{C \left([t_0, T^{'}_*]; H^{1} \right) } ||| w ||| \hspace{1 cm} \nonumber \\
&+ &2 K_1 T'^{1/2}_* ||u_ {\phi} ||_{C^1_b} || \bar{v} ||_{C \left([t_0, T^{'}_*]; H^{1} \right) } +  2 K_1 T'^{1/2}_* ||u_ {\phi} ||_{C^1_b} ||| w |||. 
\label{estim1H2}
\end{eqnarray}
Differentiating $\Theta v(t, \cdot)$ w.r.t the space variable, we obtain
\begin{eqnarray*}
\partial_x \Theta v(t, \cdot) & = & \partial_x K(t, \cdot) \ast w_0 - \int_0^t \partial_x K(t-s, \cdot) \ast \partial_x ( \bar{v} \, w)(s, \cdot) \, ds \\ 
&\,& - \int_0^t \partial_x K(t-s, \cdot) \ast \partial_x ( \partial_x (u_{\phi}) \, \bar{v})(s, \cdot) \, ds - \int_0^t \partial_x K(t-s, \cdot) \ast \partial_x (  u_{\phi} \, w)(s, \cdot) \, ds, \\
\end{eqnarray*}
and developing, we get 
\begin{eqnarray*}
\partial_x \Theta v(t, \cdot) & = &  \partial_x K(t, \cdot) \ast w_0 - \int_0^t \partial_x K(t-s, \cdot) \ast \left[ w \, \partial_x \bar{v} + \bar{v} \, \partial_x w  \right](s, \cdot)  \, ds \\ 
&\,&  - \int_0^t \partial_x K(t-s, \cdot) \ast \left[ \partial_x^2(u_\phi) \, \bar{v} + \partial_x(u_\phi) \, \partial_x \bar{v}\right] (s, \cdot) \, ds \\
&\,& - \int_0^t \partial_x K(t-s, \cdot) \ast \left[ \partial_x(u_\phi) \, w + u_\phi \, \partial_x w \right](s, \cdot) \, ds. 
\end{eqnarray*} 
Now, from Young inequalities, we have
\begin{eqnarray*}
||\partial_x \Theta v(t, \cdot)||_{L^2} &\leq &  ||\partial_x K(t, \cdot)||_{L^1} ||w_0||_{L^2} + \int_0^t || \partial_x K(t-s, \cdot)||_{L^2}   ||w \, \partial_x \bar{v} (s, \cdot) ||_{L^1} \, ds \\
&\,&  +  \int_0^t || \partial_x K(t-s, \cdot)||_{L^2}  ||\bar{v} \, \partial_x w (s, \cdot)||_{L^1}  \, ds \\
&\,& + \int_0^t ||\partial_x K(t-s, \cdot)||_{L^1} \left[ || \partial_x^2(u_\phi) \, \bar{v} (s, \cdot)||_{L^2} + ||  \partial_x(u_\phi) \, \partial_x \bar{v}(s, \cdot)||_{L^2} \right]  \, ds \\
&\,& + \int_0^t ||\partial_x K(t-s, \cdot)||_{L^1} \left[ || \partial_x(u_\phi) \, w (s, \cdot)||_{L^2} + ||u_\phi \, \partial_x w(s, \cdot) ||_{L^2} \right]  \, ds.
\end{eqnarray*}
Finally, from Proposition \ref{kernel20}, we obtain
\begin{eqnarray*}
||\partial_x \Theta v(t, \cdot)||_{L^2} &\leq &  t^{-1/2} K_1 ||w_0||_{L^2} + 4 \, t^{1/4} \, K_0  || \bar{v}||_{C([t_0; T^{'}_*]; H^{1})} |||w ||| \\
&\,&  + \int_0^t K_0 \, (t-s)^{-3/4} \, s^{-1/2} \, ds \, || \bar{v}||_{C([t_0; T^{'}_*]; H^{1})} \sup_{s \in (0, T'_*]} s^{1/2} ||\partial_x w (s, \cdot)||_{L^2}  \\
&\,& + 4 \, K_1 \, t^{1/2} \, ||u_\phi ||_{C^2_b} || \bar{v}||_{C([t_0; T^{'}_*]; H^{1})} + 2 \, K_1 \, t^{1/2} \, ||u_\phi ||_{C^1_b} ||| w ||| \\
&\,& + \int_0^t K_1 \, (t-s)^{-1/2} \, s^{-1/2} \, ds \, ||u_\phi||_{C^2_b} \sup_{s\in (0, T'_*]  } s^{1/2} ||\partial_x w (s, \cdot)||_{L^2}.  
\end{eqnarray*}
In other words, we have for all $t \in (0,T'_*]$
\begin{eqnarray}
t^{1/2} \, ||\partial_x \Theta v(t, \cdot)||_{L^2} &\leq &  K_1 ||w_0||_{L^2} + 4 \, T_{*}^{'3/4} \, K_0  || \bar{v}||_{C([t_0; T^{'}_*]; H^{1})} |||w ||| \nonumber \\
&\,&  +  K_0  \, I \,  T_{*}^{'1/4} \, || \bar{v}||_{C([t_0; T^{'}_*]; H^{1})} \, |||w ||| +  4 \, K_1 \,  T_{*}^{'}\, ||u_\phi ||_{C^2_b} || \bar{v}||_{C([t_0; T^{'}_*]; H^{1})} \nonumber \\
&\,&  + 2 \, K_1 \,  \pi \,  T_{*}^{'} \, ||u_\phi ||_{C^1_b} ||| w ||| +  K_1 \, T_{*}^{'1/2} \, ||u_\phi||_{C^2_b} \, |||w |||, 
\label{estim2H2}
\end{eqnarray}
where $I = B(\frac{1}{2},\frac{1}{4})$. 
Hence, using \eqref{estim1H2} and  \eqref{estim2H2}, we get
\begin{eqnarray*}
|||\Theta w ||| &\leq & e^{\alpha_0 T'_*} ||w_0||_{L^{2}(\R)} + K_1 ||w_0 ||_{L^{2}(\R)}  + 2 K_1 ||u_{\phi}||_{C^{2}_{b}(\R)}  || \bar{v}||_{C([t_0; T^{'}_*]; H^{1}(\R))}(2 \, T'_* + T'^{1/2}_* ) \\
&\, & \hspace{0.5 cm} + \, C |||w||| (T^{'1/4}_* + T^{'1/2}_*+T^{'3/4}_*+ T^{'}_* ),
\end{eqnarray*}
for some positive constant $C$ which depends on $K_0, K_1, || \bar{v}||_{C([t_0; T^{'}_*]; H^{1}(\R))}$ and $||u_{\phi}||_{C^{2}_{b}(\R)} $. \\
We next leave to reader to verify that: for any $w_1, w_2 \in X$,
\begin{eqnarray*}
|||\Theta w_1 - \Theta w_2 ||| &\leq & C'(T'^{1/4}_* + T'^{1/2}_* +T'^{3/4}_*+ T'_* ) |||w_1- w_2|||,
\end{eqnarray*}
where $C'$ is a positive constant which depends on $K_0, K_1, || \bar{v}||_{C([t_0; T^{'}_*]; H^{1}(\R))}$ and $||u_{\phi}||_{C^{2}_{b}(\R)} $. \\
Then, if $T^{'}_* > 0$ satisfies 
\begin{eqnarray*}
&\, & e^{\alpha_0 T'_*} ||w_0||_{L^{2}(\R)} + K_1 ||w_0 ||_{L^{2}(\R)}  + 2 K_1 ||u_{\phi}||_{C^{1}_{b}(\R)}  || \bar{v}||_{C([t_0; T^{'}]; H^{1}(\R))}(2 T'_* + T^{'1/2}_* ) \\
&\, & \hspace{6.5 cm } + \, C \, R \, (T^{'1/4}_* + T^{'1/2}_*+  T^{'3/4}_* + T^{'}_* ) \leq R,
\end{eqnarray*}
and 
\begin{equation*}
C'\, (T^{'1/4}_* + T^{'1/2}_* + T^{'3/4}_* + T_* ) < 1, 
\end{equation*}
$\Theta : B_{R}(X) \longrightarrow B_{R}(X)$ is a contraction, where $B_{R}(X)$ is ball of $X$ of radius $R$ centered at the origin. 
Using a contracting point fixed theorem, it exists a unique fixed point, which we denote by $w$. 
But it is easy to see that $\Theta \partial_x \bar{v} = \partial_x \bar{v}$ taking into account the space derivated of the Duhamel formulation \eqref{duhamel2}. Thanks to a uniqueness argument, we deduce that $ w = \partial_x \bar{v}$ and thus that $v \in C((0, T'_*]; H^{2}(\R))$, which completes the proof of this lemma.
\end{proof}

Let us now prove the global-in-time existence of mild solution $v$.

\begin{proposition}[Global-in-time existence \label{global}]
Let $v_0 \in L^2(\R)$, $\phi \in C^2_b(\R)$ and $T>0$. Then, there exists a (unique) mild solution $v \in C([0,T]; L^{2}(\R))\cap C((0,T]; H^{2}(\R))$ to \eqref{fowlermodif2}. Moreover, $v$ 
satisfies the PDE \eqref{fowlermodif2} in the distribution sense. 
\end{proposition}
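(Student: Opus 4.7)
The strategy is to combine the local existence of Lemma \ref{regularityH2}, which produces $v \in C([0, T^{'}_\ast]; L^2(\R)) \cap C((0, T^{'}_\ast]; H^2(\R))$, with an a priori $L^2$ energy estimate that rules out blow-up of $\n v(t, \cdot) \n_{L^2(\R)}$ in finite time, and to iterate the local construction as long as this $L^2$-bound holds. The main obstacle is to derive the energy estimate in spite of the anti-diffusive nature of $\I$: for small frequencies one has $\mbox{Re}(\psi_\I(\xi)) = 4\pi^2 \xi^2 - a_\I |\xi|^{4/3} < 0$, so the dissipation does not absorb the anti-diffusion pointwise. Fortunately, $\mbox{Re}(\psi_\I)$ is bounded below on $\R$, and this is what produces only exponential growth of the $L^2$-norm.

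For the energy estimate, fix $t_0 > 0$ so that $v(t_0, \cdot) \in H^2(\R)$, multiply equation \eqref{fowlermodif2} by $v$ and integrate over $\R$. Since $v \in H^1(\R)$ vanishes at infinity, integration by parts yields $\int_\R v\,\partial_x(v^2/2)\,dx = \tfrac{1}{3}\int_\R \partial_x(v^3)\,dx = 0$ and $\int_\R v\,\partial_x(u_\phi v)\,dx = \tfrac{1}{2}\int_\R v^2\,\partial_x u_\phi\,dx$, the latter bounded by $\tfrac{1}{2}\n u_\phi \n_{C^1_b(\R)} \n v \n^2_{L^2(\R)}$. Plancherel and \eqref{psiexpression2} give
$$\int_\R v\,(\I[v] - \partial_{xx}^2 v)\,dx = \int_\R \mbox{Re}(\psi_\I(\xi))\,|\F v(\xi)|^2\,d\xi \geq -\alpha_0 \n v \n^2_{L^2(\R)},$$
with $\alpha_0 = -\min \mbox{Re}(\psi_\I) > 0$ as in Remark \ref{remarkestimation}. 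Summing these contributions yields
$$\frac{d}{dt} \n v(t, \cdot) \n^2_{L^2(\R)} \leq \bigl(2\alpha_0 + \n u_\phi \n_{C^1_b(\R)}\bigr) \n v(t, \cdot) \n^2_{L^2(\R)},$$
and Grönwall's lemma together with the $L^2$-continuity at $t = 0$ yields the a priori bound $\n v(t, \cdot) \n_{L^2(\R)} \leq \n v_0 \n_{L^2(\R)}\, e^{(\alpha_0 + \frac{1}{2}\n u_\phi \n_{C^1_b(\R)})t}$ on any interval of existence.

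Let $T_{max}$ be the supremum of times $T' \in (0, T]$ for which a mild solution in $C([0, T']; L^2(\R)) \cap C((0, T']; H^2(\R))$ to \eqref{fowlermodif2} exists, and assume by contradiction that $T_{max} < T$. The energy bound provides a uniform $L^2$-bound $M$ on $\{v(t, \cdot) : t \in [0, T_{max})\}$. Applying Lemma \ref{regularityH2} with initial datum $v(t_0, \cdot)$, the local time of existence is bounded below by some $\tau = \tau(M, \n u_\phi \n_{C^2_b(\R)}) > 0$ independent of $t_0$. Choosing $t_0 \in (T_{max} - \tau, T_{max})$ and stitching the two solutions via the semi-group property (Remark \ref{semigroupe}) and uniqueness (Section \ref{uniqueness}) extends the solution strictly beyond $T_{max}$, a contradiction. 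Finally, for the distribution sense, $v \in C((0, T]; H^2(\R))$ makes each of $\partial_x(v^2/2 + u_\phi v)$, $\I[v]$ (Proposition \ref{estimnonlocal}) and $\partial_{xx}^2 v$ lie in $C((0, T]; L^2(\R))$; differentiating Duhamel's formula \eqref{duhamel2} in $t$, and using that $K$ is the kernel of $\I - \partial_{xx}^2$, produces the equation in $\mathcal{D}'((0, T] \times \R)$.
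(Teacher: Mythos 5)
Your proposal is correct and follows essentially the same route as the paper: local existence in $H^2$ (Lemma \ref{regularityH2}), the $L^2$ energy estimate obtained by testing the equation against $v$ and using Plancherel with $\mathrm{Re}(\psi_\I)\geq -\alpha_0$, and a continuation argument based on the fact that the local existence time depends only on $\|v_0\|_{L^2}$ and $\|u_\phi\|_{C^2_b}$. The only difference is one of ordering: the paper first justifies rigorously (via the ODE satisfied by the Fourier transform of the Duhamel term, where the $H^2$ regularity is used to control $\psi_\I \F v$) that the mild solution actually satisfies the PDE with $\F v\in C^1([0,T];L^2)$, and only then performs the energy computation, whereas you invoke the equation for the energy estimate and defer its justification to the end — the ingredients are the same, but logically that step must come first.
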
 

\begin{proof}
\noindent \textbf{First step: $v$ is a distribution solution.} Taking the Fourier transform w.r.t the space variable in \eqref{duhamel2}, we get for all $t \in [0,T]$ and all $\xi \in \R$,
\begin{eqnarray} \label{equal1}
\F(v(t,\cdot))(\xi)&=&e^{-t \psi_{\I}(\xi)} \F v_0(\xi)- \int_{0}^{t} i \pi \xi e^{-(t-s) \psi_{\I}(\xi)} \F(v^2(s,\cdot))(\xi) ds \nonumber \\
&\,& - \int_{0}^{t} 2 i \pi \xi e^{-(t-s) \psi_{\I}(\xi)} \F(u_\phi v(s,\cdot))(\xi) \, ds. 
\end{eqnarray} 
Define
$$
G(t,\xi)=- \int_{0}^{t} 2 i \pi \xi e^{-(t-s) \psi_{\I}(\xi)} \F\left( \frac{v^2}{2} + u_\phi v\right) (s,\cdot)(\xi) \, ds.
$$
Classical results on ODE imply that $G$ is differentiable w.r.t the time with \\ 
\begin{eqnarray}\label{equal2}
\partial_t G(t,\xi)+ \psi_{\I}(\xi) G(t,\xi) &=& -i \pi \xi \F\left( v^2(t, \cdot)\right)(\xi) - 2i \pi \xi \F\left( (u_\phi v)(t, \cdot) \right)(\xi), \nonumber   \\
&=&-\F \left(  \partial_x(\frac{v^2}{2})(t,\cdot) \right) (\xi)-\F \left( \partial_x(u_\phi v)(t,\cdot) \right) (\xi).
\end{eqnarray}
Let us now prove that all terms in \eqref{equal2} are continuous with values in $L^2$. 
Since, $v \in C((0,T]; H^{1}(\R))$ then $\partial_x(v^2), \, \partial_x(u_\phi v) \in C((0,T]; L^{2}(\R))$. We thus deduce that $\F \left( \partial_x(\frac{v^2}{2}) \right) $ and $\F \left( \partial_x (u_\phi v) \right) $ are continuous with values in $L^2(\R)$. 
Moreover,  Equation \eqref{equal1} implies that 
\begin{eqnarray*}
\psi_\I G(t, \cdot) = \psi_\I \left( \F(v(t,\cdot)) - e^{-t \psi_\I } \F v_0  \right), 
\end{eqnarray*}
and so $\psi_\I \, G(t, \cdot) $ is continuous with values in $L^2$. Indeed, \\
\begin{eqnarray*}
\int_\R | \psi_\I (\xi) G(t, \xi ) |^2 \, d\xi & = & \int_{-1}^{1} |\psi_\I (\xi) G(t,\xi) |^2 \, d\xi +  \int_{\R \setminus (-1,1)} |\psi (\xi) G(t,\xi) |^2 \, d\xi, \\
&\leq & \sup_{\xi \in [-1,1]} | \psi_\I (\xi) |^2 ||G(t,\cdot) ||^2_{L^2(\R)} + C  \int_{\R \setminus (-1,1)} |\xi^2 G(t,\xi) |^2 \, d\xi, \\
&\leq &  \sup_{\xi \in [-1,1]} | \psi_\I (\xi) |^2 ||G(t,\cdot) ||^2_{L^2(\R)} \\
&\,& \hspace{2.2 cm} + \, C \, \int_{\R \setminus (-1,1)} | \F( \partial^2_{xx} v(t,\cdot)) - \xi ^2 e^{-t \psi_\I(\xi)} \F v_0 |^2 \, d\xi, \\
&\leq & \sup_{\xi \in [-1,1]} | \psi_\I (\xi) |^2 ||G(t,\cdot) ||^2_{L^2(\R)} + \tilde{C} || v(t,\cdot) ||^2_{H^{2}(\R)} \\
&\,& \hspace{2.5 cm}  + \tilde{C}|| v_0||_{L^{2}(\R)}^2 + C ||v(t, \cdot)||_{H^2}||v_0||_{L^2} , \\
&< & \infty,
\end{eqnarray*}
because $\psi_\I$ behaves at infinity as $|\cdot|^2$. $C, \tilde{C}$ are two positive constants. Hence, we have that the function $t \rightarrow \psi_\I G(t, \cdot) \in L^{2}(\R, \mathbb{C})$ is continuous. Finally, we have proved that all the terms in \eqref{equal2} are continuous with values in $L^2$. 
Therefore, from \eqref{equal2}, we get that $G \in C^{1}([0,T];L^2(\R,\mathbb{C})) $ and then
\begin{equation*}
\frac{d}{dt}(G(t, \cdot))+ \psi_{\I} \, G(t, \cdot)=-\F \left( \partial_x(\frac{v^2}{2})(t,\cdot) \right) -\F \left( \partial_x(u_\phi v)(t,\cdot)\right) .
\end{equation*}
Moreover,  $t \in [0,T]\rightarrow e^{-t \psi_{\I}} \F v_0 \in L^{2}(\R,\mathbb{C})$ is $C^1$ with
\begin{equation*}
\frac{d}{dt}(e^{-t \psi_{\I}} \F v_0)+ \psi_{\I} e^{-t \psi_{\I}} \F v_0=0.
\end{equation*}
From Equation \eqref{equal1}, we infer that $\F v$ is $C^1$ on $[0,T]$ with values in $L^2$ with 
\begin{equation*}
\frac{d}{dt}\F (v(t, \cdot))=-\psi_{\I} \F(v(t,\cdot))-\F \left( \partial_x(\frac{v^2}{2})(t,\cdot)\right) -\F \left( \partial_x(u_\phi v)(t, \cdot)\right) .
\end{equation*} 
Since $\F$ is an isometry of $L^2$, we deduce that $v \in C^{1}([0,T]; L^{2}(\R))$  and by \eqref{fourier2}, we get
\begin{eqnarray}
\frac{d}{dt}(v(t,\cdot))&=&-\F^{-1}(\psi_{\I} \F \left( v(t,\cdot))\right) -\partial_x(\frac{v^2}{2})(t,\cdot)-\partial_x(u_\phi v)(t,\cdot), \nonumber \\
&=& -\I[v(t,\cdot)]+ \partial_{xx}^2 v(t,\cdot)-\partial_x(\frac{v^2}{2})(t,\cdot)-\partial_x(u_\phi v)(t,\cdot).
\label{egalitedistribution}
\end{eqnarray}
We are now going to prove that $v$ satisfies the PDE \eqref{fowlermodif2} in the distribution sense. Let us note 
\begin{equation*}
w(t, \cdot):=-\I[v(t,\cdot)]+ \partial_{xx}^2 v(t,\cdot)-\partial_x(\frac{v^2}{2})(t,\cdot)-\partial_x(u_\phi v)(t,\cdot)
\end{equation*}
and let us show that 
\begin{equation*}
\partial_t v= w \hspace{0.5 cm} \mbox{in} \hspace{0.5 cm} \mathcal{D}'((0,T) \times \R ).
\end{equation*}
By definition, we have for any $\varphi \in \mathcal{D}(0,T)$ and $\psi \in \mathcal{D}(\R)$:
\begin{eqnarray*}
< \partial_t v, \varphi \psi > &=& - \int_{0}^{T} \int_{\R} v(t,x) \frac{d \varphi}{d t} \psi(x) \, dt \, dx, \\
&=& - \int_{0}^{T} \left( \int_\R v(t,x) \psi(x) \, dx \right) \frac{d \varphi(t)}{d t} \, dt  .
\end{eqnarray*}
Therefore, it is enough to prove that
\begin{equation*}
\int_0^T \left( \int_\R w(t,x) \psi (x) dx \right)  \varphi(t) dt = - \int_0^T \left( \int_\R v(t,x) \psi (x) dx \right)  \varphi '(t) dt,
\end{equation*} 
i.e.
\begin{equation*}
 \frac{d}{dt} \int_\R v(t,x) \psi (x) dx = \int_\R w(t,x) \psi (x) \, dx, 
\end{equation*}
in the sense of $\mathcal{D}^{'}(0,T)$.
But by \eqref{egalitedistribution}, we have that the function 
$$t \in (0,T) \longmapsto \int_\R v(t,x) \psi (x) dx \in \R $$
is $C^{1}$ and 
\begin{equation*}
\frac{d}{dt} \int_\R v(t,x) \psi (x) dx = \int_\R w(t,x) \psi (x) dx
\end{equation*}
in the classical sense, which proves that the mild solution $v$ is a distribution solution of \eqref{fowlermodif2}. \\ 

\noindent \textbf{Second step: A priori estimate.} By the first step, we have
$$\partial_t v+ \partial_x (\frac{v^2}{2}+ u_\phi v)+ \I[v]-\partial_{xx}^{2} v=0$$
 in the distribution sense. Therefore, multiplying this equality by $v$ and integrating w.r.t the space variable, we get: 
\begin{equation} \label{relation10}
\int_{\R} v_{t}v \: dx+ \int_{\R} \left(\I[v]-v_{xx}\right)v \: dx +\int_{\R} \left(u_\phi v\right)_{x} v \: dx=0
\end{equation}
because the nonlinear term is zero. Indeed, integrating by parts, we have
\begin{equation*}
\int_{\R} \partial_x (\frac{v^2}{2}) v \, dx= -\int_{\R} \frac{v^2}{2} \partial_x v \, dx= - \frac{1}{2} \int_{\R} \partial_x (\frac{v^2}{2}) v \, dx.  
\end{equation*}
There is no boundary term from the infinity because for all $t \in (0,T]$, $v(t,\cdot) \in H^{2}(\R)$.
Using \eqref{fourier2} and the fact that $\int_\R (\I[v]-\partial_{xx}^2 v) v \, dx$ is real, we get
\begin{equation} \label{relation20}
\int_\R (\I[v]-\partial_{xx}^2 v) v \, dx = \int_\R \F^{-1}(\psi_\I
\F v) v \, dx = \int_\R \psi_\I |\F v|^2 d\xi
 = \int_\R \mbox{Re}(\psi_\I) |\F v|^2 d\xi.
\end{equation} 
Moreover, since $u_{\phi} v \in H^{1}(\R)$ we have 
\begin{equation} \label{relation30}
\int_{\R} \left(u_\phi v\right)_{x} v \: dx=- \int_{\R} u_\phi v v_{x} \: dx= - \int_{\R} u_\phi \left(\frac{v^{2}}{2}\right)_{x} \: dx= \int_{\R} (\partial_{x} u_{\phi}) \frac{v^{2}}{2} \: dx.
\end{equation}
Using \eqref{relation10}, \eqref{relation20} and \eqref{relation30}, we obtain
\begin{eqnarray*}
\frac{1}{2} \frac{d}{dt}\n v(t, \cdot) \n_{L^2}^{2} & \leq & (\alpha_{0}+ C_\phi) \n v(t, \cdot) \n_{L^{2}}^{2} 
\end{eqnarray*}
where $\alpha_0=-\min \mbox{Re} \left( \psi_{\I}\right) >0$ and $C_\phi = \frac{1}{2}\n u_\phi \n_{C^{1}_b } $. Finally, we get for all $t \in [0,T]$ the following estimate
\begin{equation} \label{energy2}
\n v(t, \cdot) \n_{L^2(\R)}\leq e^{(\alpha_{0}+ C_\phi) t} \n v_{0} \n_{L^2(\R)}.
\end{equation}
\\
\noindent \textbf{Last step: global-in-time existence.} Up to this point, we know thanks to Proposition \ref{existlocal} and Lemma \ref{regularityH2} that there exists $T_\ast =T_\ast(\n v_{0} \n_{L^{2}(\R)}, \n u_\phi \n_{C^{2}_b(\R)} ) >0$  such that $v \in C([0,T_\ast]; L^2(\R)) \cap C((0,T_\ast]; H^2(\R))$ is a mild solution of \eqref{fowlermodif2} on $(0,T_{\ast}]$. Let us define 
$$t_{0}:=\sup\left\{t>0 \hspace{0.1 cm}/ \mbox{ there exists a mild solution of \eqref{fowlermodif2} on } \left(0,t\right) \mbox{ with initial condition } v_{0} \right\}.$$
To prove the global-in-time existence of a mild solution, we have to prove that $t_0 \geq T$, where $T$ is any positive constant. Assume by contradiction that $t_0 < T$. 
With again the help of Proposition \ref{existlocal}, there exists $T'_\ast > 0 $ such that for any initial data $w_0$ that satisfy
\begin{equation} \label{ci}
\n w_0 \n_{L^2(\R)} \leq e^{(\alpha_0 + C_\phi) t_0} \n v_0 \n_{L^2(\R)},
\end{equation}  
it exists a mild solution $w$ on $(0,T'_\ast]$. 
Using \eqref{energy2}, we have that $w_0:=v(t_0-T_\ast^{'}/2, \cdot)$ satisfies \eqref{ci}. Therefore, using an argument of uniqueness, we deduce that \\
$ v (t_0-T_\ast^{'}/2 + t, \cdot) = w(t,\cdot)$ for all $t \in [0,T_\ast^{'}/2)$.
To finish with, we define $\tilde{v}$ by $\tilde{v}=v$ on $[0,t_0)$ and $\tilde{v}(t_0-T_{\ast}^{'}/2+t, \cdot)=w(t,\cdot)$ for $t \in [T^{'}_\ast/2, T_\ast^{'}]$. Hence, $\tilde{v}$ is a mild solution on $[0,t_0+ T_\ast^{'}/2]$ with initial datum $v_0$, which gives us a contradiction.

\end{proof}

\section{Regularity of the solution \label{regularity}}

This section is devoted to the proof of the existence of classical solutions $v$ to \eqref{fowlermodif2}.

\begin{proposition}[Solution in the classical sense]
Let $v_0 \in L^2(\R)$, $\phi \in C^2_b(\R)$ and $T > 0$. The unique mild solution $v \in C([0,T]; L^{2}(\R))\cap C((0,T]; H^{2}(\R))$ of \eqref{fowlermodif2} belongs to $ C^{1,2} \left( (0,T] \times \R \right) $ and 
satisfies $$\partial_t v + \partial_x{ \left( \frac{v^2}{2} + u_{\phi} v \right) } + \I[v] - \partial^2_{xx} v = 0,$$ on $(0,T] \times \R$ in the classical sense.  \\
\label{classicalsense}
\end{proposition}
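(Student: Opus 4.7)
The plan is to bootstrap the spatial regularity of $v$ via the Duhamel formula and the smoothing properties of $K$, then read off the classical PDE from the identity already available at the $L^2$ level in Proposition \ref{global}.

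First I would fix $t_0 \in (0,T)$ and work from the semigroup representation of Remark \ref{semigroupe}:
$$ v(t, \cdot) = K(t-t_0, \cdot) \ast v(t_0, \cdot) - \int_{t_0}^t \partial_x K(t-s, \cdot) \ast \left( \frac{v^2}{2} + u_\phi v \right)(s, \cdot) \, ds. $$
Because $\mathrm{Re}(\psi_\I)(\xi) \sim 4\pi^2 \xi^2$ at infinity, every derivative $\partial_x^k K(\tau, \cdot)$ lies in $L^1(\R) \cap L^2(\R)$ for $\tau > 0$ with norms that blow up only as a negative power of $\tau$ as $\tau \to 0^+$ (same Plancherel-type estimate as in Proposition \ref{kernel20}, with an extra factor $\xi^k$). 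Since $H^2(\R)$ is a Banach algebra and $u_\phi(t, \cdot) \in C^2_b(\R)$ uniformly in $t$, the source $\frac{v^2}{2} + u_\phi v$ belongs to $C((0,T]; H^2(\R))$; an iteration of Duhamel then shows $v \in C((0,T]; H^s(\R))$ for every $s \geq 0$.

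Next, the Sobolev embedding $H^s(\R) \hookrightarrow C^k_b(\R)$ for $s > k + 1/2$ converts this into joint continuity in $(t,x)$: for every $k \in \mathbb{N}$ the map $t \in (0,T] \mapsto v(t, \cdot) \in C^k_b(\R)$ is continuous, so $v$ and all its spatial derivatives are continuous on $(0,T] \times \R$; in particular $\partial_x v$ and $\partial_{xx}^2 v$ are continuous there. Combining Proposition \ref{estimnonlocal} with Remark \ref{remarkfourier} shows that $\I$ maps $H^s(\R)$ continuously into $C_b(\R)$ for $s$ large enough, so $(t,x) \mapsto \I[v(t, \cdot)](x)$ is also continuous on $(0,T] \times \R$.

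Finally, Proposition \ref{global} already yields the identity
$$ \partial_t v(t, \cdot) = -\partial_x\!\left( \frac{v^2}{2} + u_\phi v \right)(t, \cdot) - \I[v(t, \cdot)] + \partial_{xx}^2 v(t, \cdot) $$
in $C([0,T]; L^2(\R))$. By the two previous steps the right-hand side is a continuous function of $(t,x)$ on $(0,T] \times \R$, so $\partial_t v$ admits this expression as a continuous representative and $v \in C^{1,2}((0,T] \times \R)$ with the equation holding pointwise. The main obstacle is the first step: one has to track carefully the time-singularity of $\n \partial_x^k K(\tau, \cdot) \n_{L^p}$ as $\tau \to 0^+$ so that the Duhamel integral converges in $H^s$ and depends continuously on $t$; once this bootstrap is secured, the continuity of $\I[v]$ and the identification of $\partial_t v$ with the continuous right-hand side are essentially formal.
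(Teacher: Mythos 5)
Your proposal is correct in outline but takes a genuinely different route from the paper, and its central step is asserted rather than proved. The paper never leaves the $H^2$ level: it differentiates the Duhamel formula (restarted at a time $t_0>0$) twice in space, splits the resulting source into $u_1=(\partial_x v)^2+v\,\partial_{xx}^2v\in C((0,T];L^1(\R)\cap L^2(\R))$ and $u_2\in C((0,T];L^2(\R))$, and proves joint $(t,x)$-continuity of $\partial_{xx}^2v$ directly, by combining an equicontinuity-in-$x$ bound, uniform in $t$ (continuity of translations in $L^2$ plus uniform continuity of $s\mapsto u_i(t_0+s,\cdot)$ on a compact time interval), with continuity in $t$ with values in $L^2$. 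You instead bootstrap to $v\in C((0,T];H^s(\R))$ for all $s$ and read off joint continuity from Sobolev embeddings. Your route, if completed, gives more ($C^\infty$ regularity in space) and avoids the translation argument; the paper's route needs nothing beyond the two kernel estimates of Proposition \ref{kernel20} and no smoothing iteration. The final parts of the two arguments coincide: both take the $L^2$-valued identity of Proposition \ref{global}, observe that its right-hand side is jointly continuous (Remark \ref{remarkfourier} for $\I[v]$), and identify $\partial_t v$ with that continuous representative.

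The step that would fail as literally written is your bootstrap mechanism. You invoke that $\partial_x^kK(\tau,\cdot)\in L^1(\R)\cap L^2(\R)$ with norms blowing up like a negative power of $\tau$, but that power is $k/2$ for the $L^1$-norm, so already for $k=2$ the integral $\int_0^t\n\partial_x^2K(t-s,\cdot)\n_{L^1(\R)}\,ds$ diverges: you cannot gain a full derivative per Duhamel iteration by loading one extra derivative onto the kernel. The iteration does work, but only with a fractional gain. Since $\mbox{Re}\,\psi_\I(\xi)\geq 2\pi^2\xi^2-C$, Plancherel gives
\begin{equation*}
\n \partial_x K(\tau,\cdot)\ast f\n_{H^{s+\sigma}(\R)}\leq C_\sigma\,\tau^{-(1+\sigma)/2}\,\n f\n_{H^{s}(\R)},
\end{equation*}
which is time-integrable only for $\sigma<1$; one must therefore gain, say, half a derivative at a time, restarting from a strictly later initial time at each stage (the linear term $K(t-t_0,\cdot)\ast v(t_0,\cdot)$ is harmless, being smooth for $t>t_0$). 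You flag this as ``tracking the time-singularity carefully,'' but it is the crux of your first step and the naive full-derivative version genuinely diverges, so it must be carried out rather than announced. With that estimate supplied, the remainder of your argument (the algebra property of $H^s(\R)$ for $s\geq 2$ to keep the source in $C((0,T];H^s)$, joint continuity from $C((0,T];C^k_b(\R))$, continuity of $\I[v]$ via Proposition \ref{estimnonlocal}, and the identification of $\partial_t v$) is sound.
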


\begin{proof}

\noindent \textbf{First step: $C^2$-regularity in space.} Let us take any $t_0 \in (0,T]$ as initial time and let $T' \in (0, T-t_0]$. 
Differentiating the Duhamel formulation \eqref{duhamel2} two times w.r.t the space, we get for any $t \in [0, T']$,
\begin{eqnarray*}
\partial_{xx}^2 v(t + t_0, \cdot) &=& K(t, \cdot) \ast \partial_{xx}^2 v(t_0, \cdot) - \int_{0}^{t} \partial_{x} K(t-s, \cdot) \ast \left( u_1 + u_2\right)(t_0 + s, \cdot) \, ds ,
\end{eqnarray*}
where $u_1 : = (\partial_x v)^2 + v \partial^2_{xx}v$ and $u_2 := v \, \partial_x^2 u_\phi + 2\,  \partial_x u_\phi \, \partial_x v + u_\phi \, \partial_{xx}^2 v$. 
Since $v \in C((0,T] ; H^2(\R))$ then $u_2 \in C\left((0,T] ; L^{2}(\R) \right)$ and 
from the Sobolev embedding $H^2(\R) \hookrightarrow C^{1}_{b}(\R)$,  we get that $u_1 \in C((0,T]; L^1(\R)\cap L^{2}(\R) )$.  
Let us now define the following functions
\begin{equation*}
F_i(t, x) : = \int_{0}^{t} \partial_x K(t-s, \cdot) \ast u_i(t_0 + s, \cdot)(x) \, ds, \hspace{1 cm} \mbox{ for } i=1,2. 
\end{equation*}

For all $x,y \in \R$, we have thanks to Cauchy-Schwartz inequality
\begin{eqnarray*}
&\, & |\partial_x K(t-s, \cdot) \ast  u_i(t_0 + s, \cdot)(x) - \partial_x K(t-s, \cdot) \ast u_i(t_0 + s, \cdot)(y)| \\
& \, & \hspace{4 cm} \leq \int_{\R}  | \partial_x K(t-s, z)| \, |u_i(t_0 + s, x-z ) - u_i(t_0 + s, y-z )| \, dz, \\
&\, & \hspace{4 cm} \leq ||\mathcal{T}_{(x-y)}\left(u_{i}(t_0+s, \cdot) \right) - u_i(s+t_0, \cdot) ||_{L^{2}(\R)} ||\partial_x K(t-s, \cdot) ||_{L^{2}(\R)},
\end{eqnarray*}
where $ \mathcal{T}_{z} \varphi$ denotes the translated function $x \rightarrow \varphi(x + z)$. \\
Therefore, for all $x,y \in \R$ and all $t \in [0,T']$, we deduce that 
\begin{eqnarray}
|F_i(t, x) - F_i(t, y) | &\leq & \int_{0}^{t} K_0 (t-s)^{-3/4} ||\mathcal{T}_{(x-y)}\left(u_{i}(t_0+s, \cdot) \right) - u_i(t_0 + s, \cdot) ||_{L^{2}(\R)} \, ds, \nonumber \\
&\leq& 4 K_0 T'^{1/4} \sup_{s \in [0,T']} ||\mathcal{T}_{(x-y)}\left(\bar{u}_{i}(s, \cdot) \right) - \bar{u}_i(s, \cdot) ||_{L^{2}(\R)},
\label{continuity}
\end{eqnarray}
where $\bar{u}_i(s, \cdot) = u_i(t_0+ s, \cdot)$. Then, $\bar{u}_i$ is uniformly continuous with values in $L^2$ as a continuous function on a compact set $[0, T']$. Therefore, for any $\epsilon > 0$, there exists a finite sequence $ 0 = s_0 < s_1 < \cdots < s_N = T'$ such that for any $s \in [0, T']$, there exists $j \in \left\lbrace 0, \cdots, N-1 \right\rbrace $ such that
\begin{equation*}
||\bar{u}_i(s, \cdot ) - \bar{u}_i(s_j, \cdot ) ||_{L^2(\R)} \leq \epsilon.
\end{equation*}
Therefore, using \eqref{continuity} we have
\begin{eqnarray*}
|F_i(t, x) - F_i(t, y) | 
&\leq& 4 K_0 T'^{1/4}  \sup_{s \in [0,T']}  ||\mathcal{T}_{(x-y)}\left(\bar{u}_{i}(s, \cdot) \right) -\mathcal{T}_{(x-y)}\left(\bar{u}_{i}(s_j, \cdot) \right)||_{L^{2}}  \\ 
&+& 4 K_0 T'^{1/4} \left\lbrace  ||\mathcal{T}_{(x-y)}\left(\bar{u}_{i}(s_j, \cdot) \right) - \bar{u}_{i}(s_j, \cdot)||_{L^{2}} + \sup_{s \in [0,T]} ||\bar{u}_{i}(s, \cdot) - \bar{u}_{i}(s_j, \cdot)||_{L^{2}} \right\rbrace  .
\end{eqnarray*} 
And since $||\mathcal{T}_{(x-y)}\left(\bar{u}_{i}(s, \cdot) \right) -\mathcal{T}_{(x-y)}\left(\bar{u}_{i}(s_j, \cdot) \right)||_{L^{2}(\R)} = ||\bar{u}_{i}(s, \cdot) - \bar{u}_{i}(s_j, \cdot)||_{L^{2}(\R)} $, we get
\begin{eqnarray*}
|F_i(t, x) - F_i(t, y) | 
&\leq& 4 K_0 T'^{1/4}  \left\lbrace ||\mathcal{T}_{(x-y)}\left(\bar{u}_{i}(s_j, \cdot) \right) - \bar{u}_{i}(s_j, \cdot)||_{L^{2}(\R)} + 2 \epsilon \right\rbrace .
\end{eqnarray*} 
And since the translated function is continuous in $L^2(\R)$, we have
$$||\mathcal{T}_{(x-y)}\left(\bar{u}_{i}(s_j, \cdot) \right) - \bar{u}_{i}(s_j, \cdot)||_{L^{2}(\R)} \rightarrow 0, $$ as $(x-y)\rightarrow 0 $. 
Hence, 
\begin{equation*}
\limsup_{(x-y)\rightarrow 0} |F_i(t, x) - F_i(t, y) |  \leq 2 \epsilon.
\end{equation*}
Taking the infimum w.r.t $\epsilon >0$, we infer that $F_i$ is continuous w.r.t the variable $x$. 
Moreover, arguing as the proof of Proposition \ref{welldefined}, we get that $F_i \in C\left([0, T']; L^{2}(\R) \right)$. From classical results, we then deduce that $F_i$ is continuous w.r.t the couple $(t,x)$ on $[0,T'] \times \R$. \\
Moreover, since $v(t_0, \cdot) \in H^{2}(\R)$, we can easily check that $(t,x) \rightarrow K(t, \cdot) \ast \partial^2_{xx} v(t_0, \cdot)(x)$ is continuous on $(0,T] \times \R$. Finally, we get that $ \partial_{xx}^2 v \in C \left([t_0, T] \times \R \right)$ and since $t_0$ is arbitrary in $(0,T]$, we conclude that $\partial_{xx}^2 v \in C \left((0, T] \times \R \right)$. \\

\noindent \textbf{Second step: $C^1$-regularity in time.} From Proposition \ref{global}, we know that 
the terms $\partial_t v$ and $ - \partial_{x}\left(\frac{v^2}{2} + u_{\phi} v \right) + \partial_{xx}^2 v - \I[v] $ have the same regularity. Moreover, by the first step of this proposition, we have that $\partial_{xx}^2 v \in C((0,T] \times \R)$ and from Sobolev embeddings and Remark  \ref{remarkfourier}, we deduce that $ \partial_{x}\left(\frac{v^2}{2} + u_{\phi} v \right)$ and $\I[v] $ belong to $ C((0,T] \times \R) $.
Finally, we obtain that $\partial_t v \in C((0,T] \times \R)$ and thus $v \in C^{1,2}((0,T] \times \R)$. 
The proof of this Proposition is now complete.
\end{proof}



\bigskip 

\noindent \textbf{Acknowledgements.} The author would like to thank Pascal Azerad for helpful discussions around this work. This work is part of ANR project Mathocean (ANR-08-BLAN-0301-02).

\bibliographystyle{plain}

\begin{thebibliography}{}

\end{thebibliography}


\begin{thebibliography}{99}

\bibitem{alibaud}  {\sc Alibaud N.; Azerad P.; D.Isebe}, {\sl A non-monotone nonlocal conservation law for dune morphodynamics}, Differential and Integral Equations {\bfseries 23} (2010), pp. 155-188. 

\bibitem{alvarez} {\sc Alvarez-Samaniego B.; Azerad P.}, {\sl Existence of travelling-wave and local well-posedness of the Fowler equation}, Disc. Cont. Dyn. Syst., Ser. B {\bfseries 12} (2009), pp. 671-692.



\bibitem{azaf} {\sc Azerad P.; Bouharguane A. }, {\sl Finite difference approximations for a fractional diffusion/anti-diffusion equation}, preprint: \texttt{http://arxiv.org/abs/1104.4861}.

\bibitem{signal} {\sc Azerad P.; Bouharguane A. ; Crouzet J.-F.}, {Simultaneous denoising and enhancement of  signals by a fractal conservation law}, submitted. Preprint: \texttt{http://arxiv4.library.cornell.edu/abs/1004.5193}.





\bibitem{afaf} {\sc Bouharguane A. }, {\sl On the instability of a nonlocal conservation law}, accepted for publication in Disc. Cont. Dyn. Syst., Ser. S, special issue in honour of Michel Chipot. 




\bibitem{JD1} {\sc Droniou J.; Gallou\"et T.; Vovelle J.}, {\sl Global solution and smothing effect for a non-local regularization of a hyperbolic equation }, J. Evol. Eq. {\bfseries 3}, no. 3 (2003), pp. 499-521. 

\bibitem{fowler1} {\sc Fowler A.C}, {\sl Mathematics and the environment}, lecture notes
\texttt{http://www2.maths.ox.ac.uk/\~{}fowler/courses/mathenvo.html}.

\bibitem{fowler2} {\sc Fowler A.C.}, {\sl Dunes and drumlins}, Geomorphological
fluid mechanics, eds. A. Provenzale and N. Balmforth,
Springer-Verlag, Berlin  {\bfseries 211} (2001),  pp. 430-454.





\bibitem{clagree} {\sc Lagr\'ee P.-Y.}, {\sl Asymptotic Methods in Fluid Mechanics: Survey and Recent Advances}, lecture notes 523, CISM International Centre for Mechanical Sciences Udine, H. STEINRUCK  Ed., Springer, (2010).


\bibitem{pod} {\sc Podlubny I.}, {\sl An introduction to fractional derivatives, fractional differential equations, to methods of their solution and some of their applications.}, Mathematics in Science and Engineering, {\bfseries 198}  Academic Press, San Diego, (1999).


































\end{thebibliography}

\end{document}